
\documentclass[letterpaper, 10 pt, conference]{ieeeconf}  

\IEEEoverridecommandlockouts                              
\overrideIEEEmargins


\title{\LARGE \bf
Ergodic control of a heterogeneous population and application to electricity pricing
}


\author{Quentin Jacquet, Wim van Ackooij, Clémence Alasseur and Stéphane Gaubert
\thanks{Q. Jacquet, W. van Ackooij and C. Alasseur are with EDF R\&D Saclay, Palaiseau, France
        {\tt\small \{quentin.jacquet, wim.van-ackooij, clemence.alasseur\}@edf.fr}}%
\thanks{Q. Jacquet and S. Gaubert are with INRIA, CMAP, Ecole Polytechnique, CNRS, Palaiseau, France
        {\tt\small stephane.gaubert@inria.fr}}%
}

\usepackage{comment}

\usepackage{enumitem}
\usepackage{dsfont,amsmath,amssymb,mathtools}

\DeclareMathOperator*{\st}{s.t.}

\DeclareMathOperator*{\Relint}{relint}
\DeclareMathOperator*{\Conv}{vex}
\DeclareMathOperator{\Lip}{Lip}

\DeclareMathOperator*{\Span}{sp}
\DeclareMathOperator{\Diam}{Diam}
\DeclareMathOperator{\OneF}{1}
\DeclareMathOperator*{\vex}{Vex}
\DeclareMathOperator*{\argmax}{arg\,max}
\DeclareMathOperator*{\argmin}{arg\,min}

\DeclareMathOperator{\bbR}{\mathbb{R}}
\DeclareMathOperator{\bbN}{\mathbb{N}}

\DeclareMathOperator{\calX}{\mathcal{X}}

\DeclareMathOperator{\calS}{\mathcal{S}}
\DeclareMathOperator{\calA}{\mathcal{A}}

\DeclareMathOperator{\calD}{\mathcal{D}}
\DeclareMathOperator{\BelOp}{\mathcal{B}}

\newcommand{\R}{\mathbb{R}}

\newcommand\hh{\hat{{h}}}  
\newcommand\hg{\hat{{g}}}  
\DeclareMathOperator{\Prob}{\mathbb{P}}

\DeclareMathOperator{\Conti}{\mathcal{C}^0}
\newcommand{\shortminus}{\scalebox{0.5}[1.0]{$-$}}

\usepackage{hyperref}
\usepackage{footnote}

\usepackage{multirow,hhline,colortbl,booktabs}
\usepackage{dsfont}

\DeclareMathSymbol{\mlq}{\mathord}{operators}{``}
\DeclareMathSymbol{\mrq}{\mathord}{operators}{`'}

\makeatletter
\newcommand*\frob{\mathpalette\bigcdot@{.7}}
\newcommand*\bigcdot@[2]{\mathbin{\vcenter{\hbox{\scalebox{#2}{$\m@th#1\bullet$}}}}}

\usepackage[
  colorinlistoftodos,bordercolor=orange,backgroundcolor=orange!20,linecolor=orange,textsize=scriptsize]{todonotes}

\usepackage[font=small,justification=centering]{caption}
\usepackage[justification=centering]{subcaption}

\usepackage{textcomp}
\usepackage{algorithm,algpseudocode}
\algdef{SE}[DOWHILE]{Do}{doWhile}{\algorithmicdo}[1]{\algorithmicwhile\ #1}%

\algnewcommand{\IfThenElse}[3]{
  \State \algorithmicif\ #1\ \algorithmicthen\ #2\ \algorithmicelse\ #3}

\newcounter{savealgorithm}

\def\namedlabel#1#2{\begingroup
    #2%
    \def\@currentlabel{#2}%
    \phantomsection\label{#1}\endgroup
}

\usepackage[backend=biber,
style=alphabetic,
firstinits,
maxbibnames=9,
sorting=nyt,
url=false]{biblatex}
\addbibresource{mybib.bib}

\usepackage[capitalise]{cleveref}

\newtheorem{prop}{Proposition}
\newtheorem{theorem}{Theorem}
\newtheorem{defin}{Definition}
\newtheorem{corol}{Corollary}
\newtheorem{lemma}{Lemma}
\newtheorem{remark}{Remark}
\newtheorem{example}{Example}

\crefname{lemma}{lemma}{lemmas}
\Crefname{lemma}{Lemma}{Lemmas}
\crefname{theorem}{theorem}{theorems}
\Crefname{theorem}{Theorem}{Theorems}
\crefname{prop}{proposition}{propositions}
\Crefname{prop}{Proposition}{Propositions}
\crefname{defin}{definition}{definitions}
\Crefname{defin}{Definition}{Definitions}
\crefname{example}{example}{examples}
\Crefname{example}{Example}{Examples}
\crefname{remark}{remark}{remarks}
\Crefname{remark}{Remark}{Remarks}

\usepackage[backend=biber,
style=alphabetic,
firstinits,
maxbibnames=9,
sorting=nyt,
url=false]{biblatex}
\addbibresource{mybib.bib}

\begin{document}

\maketitle
\thispagestyle{empty}
\pagestyle{empty}


\begin{abstract}
We consider a control problem for a heterogeneous population 
composed of agents able to switch at any time between different
options.
The controller aims to maximize an average gain per time unit, supposing that the population is of infinite size.
This leads to an ergodic control problem for a ``mean-field" Markov Decision Process in which the state space is a product of simplices, and the population evolves
according to controlled linear dynamics. By exploiting contraction properties of the dynamics in Hilbert's projective metric, we prove that the infinite-dimensional ergodic eigenproblem admits a solution and show that the latter is in general non unique.
This allows us to obtain optimal strategies, and to quantify the gap between steady-state strategies and optimal ones.
In particular, we prove in the one-dimensional case that there exist cyclic policies  -- alternating between discount and profit taking stages -- which secure a greater gain than constant-price policies. On numerical aspects, we develop a policy iteration algorithm with ``on-the-fly" generated transitions, specifically adapted to decomposable models, leading to substantial memory savings. We finally apply our results on realistic instances coming from an electricity pricing problem encountered in the retail markets, and numerically observe the emergence of cyclic promotions for sufficient inertia in the customer behavior.
\end{abstract}

\section{Introduction}

\subsection{Ergodic control for mean-field MDPs}
Many control problems involve a large number of rational agents, reacting to the decisions of a controller such as in finance~\cite{Bielecki_1999,Bauerle_2011}, routing problems~\cite{Calderone_2017} or epidemiology~\cite{Lee_2021} .
To overcome the intractability that appears when the number of individuals grows, mean-field control have been introduced, see e.g.~\cite{Bensoussan_2013}. 
Assuming that the agents in the population are indistinguishable, the fundamental idea is to apply a mean-field type approximation and to show that looking at the population distribution (instead of each individual state) is sufficient. 
In particular, early convergence results (of order $1\sqrt{N}$) for the $N$-cooperative agents control problem to the mean-field limit have been proved by Gast and Gaujal~\cite{Gast_2011} for discounted horizons. 
Motte and Pham~\cite{Motte_2022} generalize the latter results in the presence of common-noise. We focus here on the \emph{ergodic control problem} (i.e., infinite undiscounted horizon and average long-term rewards). In this context, it is showed in~\cite{Bauerle_2023} that the optimal policy in the mean-field limit is $\varepsilon$-optimal for the $N$-agents discounted version when the size of the population is large and the discount factor is close to one.

The ergodic control problem for a Markov decision
process with Bellman operator $\mathcal{B}$, on a compact
state space $\mathcal{D}$,
is classically studied by means of the ergodic eigenproblem
\begin{align}
g 1_{\mathcal{D}} + h = \mathcal{B}h \enspace,
\end{align}
in which $h$ is a bounded function on the state space,
called the bias or potential, and $g$ is a real constant. We refer to~\cite{Hernandez_Lerma_1996} for background on the topic.
If the ergodic eigenproblem is solvable, then, $g$ yields
the optimal mean payoff per time unit, and it is independent of the initial state. Moreover, an optimal
policy can be obtained by selecting maximizing actions
in the expression of $\mathcal{B}h$.
When the state and action spaces are finite, the ergodic eigenproblem
is well understood, in particular, a solution
does exist if every policy yields a unichain transition
matrix (i.e., a matrix with a unique final class), see e.g.~\cite{Puterman_1994}.

Here, the ergodic control problem arises from a mean-field approximation (called \emph{lifted} MDP in~\cite{Motte_2022}). The state space is therefore infinite and corresponds to the space of probability distributions over the choices proposed to a representative agent, typically for a finite number of choices $\{1,\hdots,n\}$, $\mathcal{D}$ is the probability simplex $\Delta_n = \{\mu\in\bbR^n_+: \sum_{i=1}^n \mu_i = 1\}$. In this context of infinite state space, the solution to the ergodic eigenproblem is a more difficult
question~\cite{KM,fathi,Nuss-Mallet,agn,Akian_2016}. 
This is particularly the case in absence of common-noise -- where the lifted MDP is of {\em deterministic} nature -- owing to the lack of regularizing
effect coming from stochasticity.

\subsection{Contributions}
We consider a population of agents, that have different types/preferences. Each agent chooses between several options, taking into account the actions made by the controller, who aims at optimizing a mean reward per time unit.
This is represented by a discrete-time ergodic control problem, in which the state --the population-- belongs to a product of simplices. We suppose that the population evolves according to the Fokker-Planck equation of a controlled
Markov chain. In this work, we directly study the ``mean-field'' model where the population is supposed to be of infinite size. This choice is motivated by our application on the French electricity market where the population is in fact the whole set of French households (around $30$ millions), leading to intractable model without such mean-field hypothesis.
Our first main result, Theorem~\ref{prop::ergodic_fixed_point}, shows that the ergodic eigenproblem does admit a solution (in the presence of common noise or not). This entails that the value of the ergodic control problem
is independent of the initial state, and this also allows us to determine optimal stationary strategies. Theorem~\ref{prop::ergodic_fixed_point} requires a primitivity assumption on the semigroup of transition matrices; it applies in particular to positive transition matrices, such as the ones arising from logit based models. The proof relies on contraction properties of the dynamics in Hilbert's projective metric, which allow us to establish compactness estimates which guarantee the existence of a solution. 

In order to numerically solve the ergodic eigenproblem, we develop a policy iteration algorithm building on~\cite{Cochet_1998,Denardo_1968} but especially intended for \emph{decomposable} state spaces (e.g., for populations of independent customers) where transitions can be generated on-the-fly using pre-computed local information, see~\Cref{algo::Howard}. This refined version is afterwards compared with existing approaches, see~\Cref{sec::results}, and reveals drastic computational time reductions with respect to the value-iteration algorithm and considerable memory gains in comparison with off-the-shelf policy iteration procedures.  

In addition, we study stationary pricing strategies. Owing to the contraction properties of the dynamics, these are such that the population distribution converges to a stationary state.
Then, we refine a result from~\cite{Flynn_1979}, providing a bound
on the loss of optimality arising from the restriction to stationary pricing
strategy. We define a family of Lagrangian functions, whose duality gap provides an explicit bound on the optimality loss, see~\Cref{prop::dual_lagrangian}. In particular, a zero duality gap guarantees that stationary pricing policies are optimal.

Finally, we apply our results to a problem of electricity pricing,
inspired by a real case study (French contracts). An essential
feature of this model is to take into account the {\em inertia} of customers, i.e., their tendency to keep their current contract even if it is not the best offer. This is represented by a logit-based stochastic transition model with switching costs. Theorem~\ref{prop::proba_stat} provides a closed-form formula for the stationary distribution, which allows us to determine steady-state policies by reduction to a single-level problem. We also obtain qualitative results through majorization concepts~\cite{Marshall_2011}, showing that the addition of inertia in the model leads to a more concentrated distribution of the population, see~\Cref{prop::majorization}.
We present numerical tests on examples of dimension $2$ and $4$. These reveal
the emergence of optimal cyclic policies for large switching costs, recovering the empirical notion of ``promotions" of~\cite{Dube_2009} and~\cite{Pavlidis_2017}.

\subsection{Related works}
As mentioned above, we allow here the ergodic eigenproblem to be of deterministic nature, more degenerate than its stochastic analogue studied in the context of average cost Markov Decision Processes, see e.g.~\cite{Arapostathis_1993} and the references therein. The main classical approach to show the existence of a solution to the infinite-dimensional ergodic eigenproblem relies on a Doeblin-type (or minorization) condition. The latter entails a contraction property of a Markov semi-group acting on spaces of measures, as well as the contraction of the Bellman operator with respect to the Hilbert pseudo-norm. It implies not only the existence but also the \emph{uniqueness} of the ergodic eigenvector (up to a constant). This method has been used in~\cite{Kurano_1989,Hernandez_Lerma_1996}, and more recently in the works of Biswas~\cite{Biswas_2015} and Wiecek~\cite{Wiecek_2019} in the mean-field context. We also refer the reader to~\cite{Bansaye_2019,Gaubert_2014} for background on Doeblin (and the more general Dobrushin) type conditions. In our setting, this approach does not apply.
In fact, we provide an explicit counter example  showing that the eigenvector may not be unique, see~\Cref{ex::non_uniqueness}, and this entails that our existence result cannot be obtained using a Doeblin-type approach. 

Instead, we exploit here the contraction properties of
the dynamics to show that the family of value functions of the associated discounted problem is equi-Lipschitz. Then, following a now classical approach of Lions, Papanicolaou and Varadan~\cite{Lions_1987}, a solution of the ergodic eigenproblem is obtained by a vanishing discount limit. The use of contraction ideas is partly inspired by a previous work of Calvez, Gabriel
 and Gaubert~\cite{1404.1868}, tackling a different problem (growth maximization).
Also, \cite{1404.1868} deals with a PDE rather than discrete setting.
Our result may also be compared with those of B\"auerle~\cite{Bauerle_2023}, in which the equi-Lipschitz property of the value function is supposed a priori.

In the deterministic setting, the ergodic eigenproblem is a special case of the
``max-plus'' or ``tropical'' infinite dimensional spectral problem~\cite{KM,Akian_2009},
or of the eigenproblem studied in discrete weak-KAM and Aubry Mather theory~\cite{fathi,thieullen}. 
Basic spectral theory results require the Bellman operator to
be compact. This holds under demanding ``controllability''
conditions (see e.g.~\cite[Theorem 3.6]{KM}), not satisfied in our setting. Extensions of these results rely on quasi-compactness techniques~\cite{Nuss-Mallet,agn}. 

We also note that the ergodic eigenproblem, in the special deterministic ``$0$-player case'',
has been studied under the name of cohomological equation
in the field of dynamical systems. The existence of a regular
solution is generally a difficult question, a series of results
going back to the work of Liv\v{s}ic~\cite{Livsic_1972}, show that a H\"older continuous eigenvector does exists
if the payment function is H\"older continuous, and if the dynamics
is given by an Anosov diffeomorphism. The latter condition
requires the tangent bundle of the state space to split in two
components, on which the dynamics is either uniformly expanding or uniformly
contracting. Here, we establish a ``one player'' version, but requiring
a uniform contraction assumption.

On the computational aspect, a standard approach to solve the ergodic eigenproblem is to use the relative value-iteration (RVI) algorithm which goes back to White~\cite{White_1963}. Its convergence requires a demanding primitivity condition, see~\cite{Federgruen_1978}. It has been remarked in~\cite{Gaubert_2020} that this can be relaxed by combining RVI with Krasnoselskii-Mann damping. In the worstcase, $\varepsilon^{-2}$ iterations are needed to solve the problem with a precision $\varepsilon$, see e.g.~(\Cref{algo::RVI}). Here, in the present mean-field case, an essential step is to discretize the state space, wich is a product of simplices. Recently, in the model-free context and for infinite discounted horizons, RVI algorithms have been used by Carmona, Laurière and Tan~\cite[Algo.~1]{Carmona_2021}, also considering a beforehand discretization. A different class of algorithm rely on policy-iteration (PI), still relying on a discretization. In the deterministic case, PI can be implemented by a fast graph algorithm, see e.g.~\cite{Cochet_1998}. Here, we exploit the decomposability of the transition matrix (the dynamics of the populations are only coupled by the control) to obtain a more scalable method. Different refinements of policy iteration have been developed by Festa~\cite{Festa_2018} using domain decomposition to obtain parallel Howard's algorithm, as well as memory space gain. Bayraktar, B\"auerle and Kara~\cite{Bayraktar_2023} prove convergence results of the solution of the discretize version to the continuous one for discounted payoff by exhibiting regret bounds between the approximated mean-field MDP obtained by a semi-Lagrangian discretization (nearest neighbor)  and the true infinite-population case. To this purpose, they fully exploit the Lipschitz property of the optimal value function~\cite[Lemma 6]{Bayraktar_2023} by supposing that the dynamics is contracting for the Wasserstein metric. 

Finally, on the application side, we aim at analyzing the impact of switching costs on the retail electricity market, and especially on the optimal behavior of the retailers. In Economics, consumers are often supposed to be fully rational, and their reactions to price to be instantaneous. 
However, many studies highlight that switching costs and limited awareness conjointly lead to inertia in retail electricity markets, which hinders efficient choices, see~\cite{Hortaccsu_2017,Ndebele_2019,Dressler_2021}. 
Inertia in imperfect markets impacts the decision of the providers and modifies their pricing strategies. Studies tend to show the importance of promotions in the pricing behaviors of firms, see~\cite{Horsky_2010,Allender_2012}. In particular, empirical analyses show how the depth and frequency of promotions are linked with the level of inertia. Here, we study the problem through a mathematical angle using mean-field MDPs. In comparison with the work of Pavlidis and Ellickson~\cite{Pavlidis_2017} on multiproduct pricing, we also consider logit-based transitions but we focus on long-term average rewards and reinforce the theoretical understanding of the model by identifying the optimal steady-states and studying the emergence of cyclic policies.

This article is organized as follows.
In Section~\ref{sec::fixed_point}, we first define the model and prove the results on the ergodic eigenproblem (existence and non-uniqueness). We then present two iterative algorithms to solve this fixed-point problem in Section~\ref{sec::resolution}. We study steady-states and their optimality in Section~\ref{sec::lagrangian}, and illustrate the electricity application in Section~\ref{sec::case_study}. 

\textit{An initial account of some of the present results appeared in the conference paper~\cite{Jacquet_2022}.}

\section{Ergodic control}\label{sec::fixed_point}
\subsection{Notation}
We denote by $\left<\cdot,\cdot\right>_n$ the scalar product on $\bbR^n$, and for any $x$ and $y$ in $\bbR^n$, $x\vee y$ represents the elementwise maximum between $x$ and $y$. We recall that the probability simplex of $\bbR^n$ is denoted by $\Delta_n = \{\mu\in\bbR^n_+: \sum_{i=1}^n \mu_i = 1\}$. Besides, we denote by
$\Span(f):= \max_{x\in E}f(x) - \min_{x\in E}f(x)$ the span of the function $f:E\to \R$.
We say that a matrix $P$ is positive, and we write $P\gg 0$, if all the coefficients of $P$ are positive. The set of convex functions with finite real values on a space $K$ is denoted by $\vex{K}$, and the convex hull of a set $K$ is denoted by $\Conv{K}$. Moreover, the set of Lipschitz function on $E$ is denoted by $\Lip(E)$, and the relative interior of a set $E$ is denoted by $\Relint(E)$.

The {\em Hilbert projective metric} $d_H$ on $\bbR^n_{> 0}$ is defined as $d_H(u,v) = \max_{1\leq i,j \leq n}\log(\frac{u_i}{v_i}\frac{v_j}{u_j})$, see~\cite{Lemmens_2009}. It is such that $d_H(u,v)=0$ iff the vectors $u$ and $v$ are proportional, hence, the name ``projective''.
For a set $E\subseteq \bbR^n_{> 0}$, we denote by $\Diam_H(E):=\max_{u,v \in E}d_H(u,v)$ the diameter of the set $E$, and for a matrix $P \in \bbR^{n\times n}$ we denote by $\Diam_H (P) := \max_{1\leq i,j\leq n} d_{H}(P_i,P_j)$ the {\em diameter} of $P$, where $P_i$ denotes the $i$th row of $P$. This can be seen to coincide with the diameter, in Hilbert's projective metric,
of the image of the set $\R_{>0}^n$ by the matrix $P$,
see Th.~A.6.2, {\em ibid.}

Finally, for a sequence $(a_t)_{t\geq 1}$, we respectively denote by $a_{s:t}$ and $a_{:t}$ the subsequences $(a_\tau)_{s\leq \tau \leq t}$ and $(a_\tau)_{1\leq \tau \leq t}$. 

\subsection{Model}
We consider a large population model composed of $K$ clusters of indistinguishable individuals. 
Each cluster $k\in[K]:=\{1,\dots,K\}$ represents a proportion $\rho_k$ of the overall population, and is supposed to react \emph{independently} from the other clusters.

Let $\calX$ and $\calA$ be respectively the state and action spaces. We suppose in the sequel that $\calX$ is finite and w.l.o.g. $\calX=\{1,2,\dots,N\}$. We suppose also that $\calA$ is a compact set (in~\Cref{sec::case_study}, $\calA$ will be an explicit subset of $\bbR^N$).

For any time $t\geq 0$ and any cluster $k$, we denote by $x^k_t \in [N]$ the stochastic choice made by a representative agent of cluster $k$ at time $t$. The distribution of the population of cluster $k$ over $[N]$ is then denoted by $\mu^k_t = \left(\Prob\left[x^k_t = i\right]\right)_{i\in[N]}\in \Delta_N$. We suppose that the dynamics of the process $\mu^k$ is given by a discrete time (linear) equation of the form
\begin{equation}\label{eq::linear_transition}
\mu^k_t = \mu^k_{t-1}P^k(a_t,\xi_t)\enspace,
\end{equation}
where $P^k$ is the Markov transition matrix of the underlying process $x^k$. 

In the first instance, we consider that the latter matrix is impacted by an exogenous process (\emph{common-noise}), independent of the initial state $\mu_0$, and represented by a sequence of independent and identically distributed (i.i.d.)
random variables $\{\xi_t\}$ with values in some space $\Xi$ and common distribution $\sigma$.

At every time $t\geq 1$, a controller chooses an action $a_t\in \calA$. She obtains a stochastic reward $r:\calA \times \Delta_N^K \times \Xi \to \bbR$ defined as 
\begin{equation}\label{eq::shape_r}
r:(a_t,\mu_t,\xi_t)\mapsto\sum_{k\in[K]}  \rho_k \left<\theta^{k}(a_t,\xi_t),\mu^{k}_t\right>_N\enspace,
\end{equation} 
where $\theta^{k}(a,\xi)\in\bbR^N$ is the vector whose entry $n$ represents the unitary reward for the controller coming from an individual of cluster $k$ in state $n$, for realization $\xi$ and after executing action $a$.

The semi-flow $\phi$ describing the dynamics of the state $\mu$ is then defined by a function depending on the past actions and past realizations of the common-noise:
$$\phi_t(a_{:t},\xi_{:t},\mu_0):= \mu_t\enspace.$$ 
We also denote by $\Pi$ the set of policies. Then, for a given policy $\pi=\{\pi_t\}_{t\geq 1}$, the action taken by the controller at time $t$ is $a_t = \pi_t(\mu_t)$.

In the sequel, the following assumptions will be used:
\begin{description}
\item[\namedlabel{hypo::continuity_P}{(A1)}] The transition $(a,\xi)\mapsto P^k(a,\xi)$ is a continuous function for any $k$.
\item[\namedlabel{hypo::primitivity_P}{(A2)}] There exists $L\in \bbN$ such that for any sequence of actions $a_{:L}\in\calA^L$, any sequence $\xi_{:L}$ and cluster $k$, $\prod_{l\in[L]} P^k(a_i,\xi_i) \gg 0$.
\end{description}
Recall that in Perron-Frobenius theory, a nonnegative matrix $M$
is said to be {\em primitive} if there is an index $l$ such that $M^l\gg 0$,
see~\cite[Ch.~2]{Berman_1994}. Assumption \ref{hypo::primitivity_P} holds in particular under
the following elementary condition:
\begin{description}
\item[\namedlabel{hypo::positiveness_P}{(A2')}] For any  $a\in\calA$, cluster $k$ and $\xi\in\Xi$, $P^k(a,\xi) \gg 0$.
\end{description}
We will also make the following assumption:
\begin{description}
\item[\namedlabel{hypo::bounded_r}{(A3)}]
There exists a constant $M_r$ such that, $|\theta^{kn}(a,\xi)|\leq M_r$ for every $k\in[K]$, $n\in[N]$, $a\in\calA$ and $\xi\in\Xi$.
\end{description}
Condition~\ref{hypo::primitivity_P} has appeared in~\cite{Gaubert_1996} in the context
of semigroup theory, it can be checked algorithmically
by reduction to a problem of decision for finite semigroups,
see Rk.~3.8, {\em ibid.} 
Observe that~\ref{hypo::bounded_r} is very reasonable in practice.

We equip the product of simplices $\Delta_N^K$ with the norm
$\|\mu\|:= \sum_{k=1}^K \|\mu^k\|_{1}$. It follows from~\ref{hypo::bounded_r} that for any action $a$ and realization $\xi\in\Xi$, the total reward function $\mu\mapsto r(a,\mu,\xi)$ is a $M_r$-Lipschitz real-valued function from $(\Delta_N^K,\|\cdot\|)$ to $(\bbR,|\cdot|)$.


\subsection{Optimality criteria}
We suppose that the controller aims to maximize her average long-term reward, i.e.,
\begin{equation}\label{eq::optimality_criteria}
g^*(\mu_0) = \sup_{\pi\in\Pi} \liminf_{T\to\infty} \frac{1}{T}\sum_{t=1}^{T} r(\pi_t(\mu_t),\mu_t)\enspace,
\end{equation}
where $r(a,\mu) = \int_{\Xi} r(a,\mu,\xi)  d \sigma(\xi)$.
Starting from $\mu_0$, the population distribution will evolve in $\Delta_N^K$ through the dynamics described in~\Cref{eq::linear_transition} according to a policy $\pi\in\Pi$. Nonetheless, with the assumptions we made, we next show that the dynamics effectively evolves on a particular subset of $\Delta_N^K$.

Let $Q^k_L(a_{:L}):=\prod_{l\in[L]} P^k(a_l)$ be the transition matrix over $L$ time steps, and $\calD_L$ be defined as $\calD_L = \bigtimes_{k\in[K]}\calD_L^k$ where
\begin{equation}\label{eq::D_L}
\calD^k_L = \Conv\left(\left\{
\mu^k Q^k_L(a_{:L},\xi_{:L}) \;\left\vert\;
\begin{split}
& a_{:L} \in \calA^L, \\
&\mu^k\in\Delta_N,\\
&\xi_{:L}\in\Xi^L
\end{split}
\right.\right\}\right)\enspace.
\end{equation}
\begin{lemma}\label{lemma::D_L}
Let \ref{hypo::continuity_P}-\ref{hypo::positiveness_P} hold. Then $\calD$ is a compact set included in the relative interior of $\Delta_N^K$. Moreover, for $t\geq 1$, $\mu_t\in \calD$ for any policy $\pi\in\Pi$.
\end{lemma}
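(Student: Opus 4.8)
The plan is to establish the three assertions separately, reducing everything to a per-cluster analysis: since $\calD_L$ is the Cartesian product of the sets $\calD^k_L$, and both compactness and the relative interior pass through finite Cartesian products (recall $\Relint(\Delta_N^K)$ is the product of the $\Relint(\Delta_N)$), it suffices to treat one cluster $k$. Throughout I write $\calD=\calD_L$; under \ref{hypo::positiveness_P} the primitivity index may be taken $L=1$, a point that will matter for the invariance.

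For compactness, I would first show that the generating set
\[
S^k:=\Big\{\mu^k Q^k_L(a_{:L},\xi_{:L})\ :\ a_{:L}\in\calA^L,\ \mu^k\in\Delta_N,\ \xi_{:L}\in\Xi^L\Big\}
\]
is compact. By \ref{hypo::continuity_P} the map $(a,\xi)\mapsto P^k(a,\xi)$ is continuous, hence so is $(\mu^k,a_{:L},\xi_{:L})\mapsto \mu^k Q^k_L(a_{:L},\xi_{:L})$, being a finite matrix product composed with a multiplication. Thus $S^k$ is the continuous image of the compact set $\Delta_N\times\calA^L\times\Xi^L$ (using that $\calA$ and $\Delta_N$ are compact, together with compactness of $\Xi$, equivalently of the set of attainable matrices $\{P^k(a,\xi)\}$), and is therefore compact. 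Consequently $\calD^k_L=\Conv(S^k)$ is compact, since the convex hull of a compact subset of $\bbR^N$ is compact (Carathéodory). A finite product of compacts being compact, $\calD$ is compact.

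For the inclusion $\calD\subseteq\Relint(\Delta_N^K)$ I would use positivity quantitatively. By \ref{hypo::positiveness_P} every $P^k(a,\xi)\gg 0$, so each product $Q^k_L(a_{:L},\xi_{:L})\gg 0$. The smallest entry $(a_{:L},\xi_{:L})\mapsto \min_{i,j}(Q^k_L)_{ij}$ is a continuous, strictly positive function on the compact set $\calA^L\times\Xi^L$, hence attains a minimum $\varepsilon>0$; taking the minimum over the finitely many clusters makes $\varepsilon$ uniform. Then for any $\mu^k\in\Delta_N$ and any coordinate $j$, $(\mu^k Q^k_L)_j=\sum_i \mu^k_i (Q^k_L)_{ij}\geq \varepsilon\sum_i\mu^k_i=\varepsilon$. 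Thus $S^k$ lies in the convex set $\{\nu\in\Delta_N:\nu_j\geq\varepsilon\ \forall j\}$, and passing to convex hulls preserves this bound, so $\calD^k_L\subseteq\{\nu:\nu_j\geq\varepsilon\}\subseteq\Relint(\Delta_N)$. Taking products gives $\calD\subseteq\Relint(\Delta_N^K)$, and the uniform $\varepsilon$ shows $\calD$ stays at a fixed positive distance from the boundary.

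Finally, for the invariance I would use that the iterates are always probability vectors. For any policy and any $t\geq L$, telescoping \eqref{eq::linear_transition} over the last $L$ steps gives $\mu^k_t=\mu^k_{t-L}\,Q^k_L(a_{t-L+1:t},\xi_{t-L+1:t})$ with $\mu^k_{t-L}\in\Delta_N$, whence $\mu^k_t\in S^k\subseteq\calD^k_L$, i.e. $\mu_t\in\calD$. Under \ref{hypo::positiveness_P} the admissible choice $L=1$ upgrades this to every $t\geq 1$, since then $\mu^k_t=\mu^k_{t-1}P^k(a_t,\xi_t)\in S^k$ directly. I expect this range of $t$ to be the only genuine obstacle: for $L\geq 2$ the telescoping controls only $t\geq L$, and one checks that for the initial steps $1\le t<L$ the state $\mu_t$ need not lie in $\calD_L$ (an arbitrary $\mu_t$ is generally not in the image of $Q^k_{L-t}$), so the assertion for all $t\geq1$ truly relies on the single-step positivity \ref{hypo::positiveness_P} rather than the weaker primitivity \ref{hypo::primitivity_P}. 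The remaining mild point is the compactness of $\Xi$ (or of the attainable transition matrices), which underpins both the continuous-image argument and the uniform lower bound $\varepsilon$ above.
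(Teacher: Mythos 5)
Your proof is correct and follows essentially the same route as the paper's: compactness of the generating set as a continuous image of $\Delta_N\times\calA^L\times\Xi^L$, compactness of its convex hull (Carath\'eodory), positivity of $Q^k_L$ giving the inclusion in $\Relint(\Delta_N^K)$ (your uniform $\varepsilon$ merely makes this quantitative), and invariance via the semiflow/telescoping identity $\mu^k_t=\mu^k_{t-L}Q^k_L(a_{t-L+1:t},\xi_{t-L+1:t})$. Your additional observation that the telescoping only covers $t\geq L$, so that the assertion for all $t\geq 1$ genuinely uses \ref{hypo::positiveness_P} with $L=1$ rather than primitivity alone, is a valid sharpening of a point the paper's one-line semiflow step leaves implicit.
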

\begin{proof}
The set $\{\mu^k Q^k(a_{:L},\xi_{:L}) \mid (a_{:L},\mu^k,\xi_{:L}) \in \calA^L\times\Delta_N\times \Xi^L\}$ is compact, by continuity of $(a,\mu,\xi)\mapsto \mu Q^k(a,\xi)$ and compactness of $\Delta_N$, $\calA$ and $\Xi$. Therefore, $\calD_L$ is compact as it is the convex hull of a compact set in finite dimension. Then, the positiveness of $Q^k$ implies that $\calD_L^k\subset\Relint(\Delta_N)$. Moreover, by property of the semiflow, $ \phi_t(a_{:t},\xi_{:L},\mu_0) = \phi_L\left(a_{t-L+1:t}, \xi_{t-L+1:t},\phi_{t-L}(a_{:t-L},\xi_{:t-L},\mu_0)\right) \in \calD_L$.
\end{proof}

We recall that the relative interior of the simplex, equipped
with Hilbert's projective metric, is a complete metric
space, on which the Hilbert's metric topology is the same
as the Euclidean topology, see~\cite[\S~2.5]{Lemmens_2009}.
Hence, under~\ref{hypo::continuity_P} and~\ref{hypo::primitivity_P}, $(\calD_L,d_H)$ is a complete metric space. We also recall {\em Birkhoff theorem},
which shows that every matrix $Q\gg 0$ is a contraction in Hilbert's projective metric, i.e., for all $\xi,a\in\Xi\times\calA$ and $\mu,\nu \in \calD$,
\begin{equation}\label{eq::contraction}d_H(\mu P(a,\xi),\nu P(a,\xi)) \leq \kappa(P(a,\xi)) d_H(\mu,\nu)\enspace,
\end{equation}
where
\[ \kappa(Q) := \tanh\left(\Diam_H(Q)\,/\,4\right) < 1
\enspace,
\]
see~\cite[Appendix~A]{Lemmens_2009}.
This property applies to the transition matrix $P^k(a)$ under \ref{hypo::positiveness_P}, or to $Q^k_L$ under \ref{hypo::primitivity_P}.

\subsection{Ergodic eigenproblem}
For any real-valued function $v : \Delta_N^K\to \bbR$ and discount factor $\alpha\in]0,1]$, we define the Bellman operator $\BelOp_\alpha$ as
\begin{equation}
\BelOp_\alpha v(\mu) = \max_{a\in\calA} \int_{\Xi} \left[r(a,\mu,\xi) + \alpha v(\mu P(a,\xi))\right]d\sigma(\xi)\enspace.
\end{equation}
For $\alpha=1$, we simply write $\BelOp = \BelOp_1$. For $\alpha<1$, we denote by $v_\alpha$ the solution of $\BelOp_\alpha v = v$, which can be obtained as the limit of the sequence $(v^j_\alpha)_{j\in\bbN}$ where $v^{j+1}_\alpha = \BelOp_\alpha v^j_\alpha$ and $v^j_\alpha \equiv 0$. This result follows from the fact that the Bellman operator $\BelOp_\alpha$ is a sup-norm contraction, for $\alpha <1$.
A first observation is that $\mu \mapsto (\BelOp v) (\mu)$ is convex for any real-valued convex function $v$. Indeed, the transition dynamics~\eqref{eq::linear_transition} is linear in $\mu$, as well as the reward~\eqref{eq::shape_r}; therefore, for any $a \in \calA$, the expression under the maximum is convex in $\mu$, and since the maximization preserves the convexity, the observation is established. 
For a feedback policy $\pi$, we also define $\BelOp^\pi$ the Kolmogorov operator such that $\BelOp^\pi v\,(\mu) = \int_\Xi \left[r(\pi(\mu),\mu,\xi) + v(\mu P(\pi(\mu),\xi))\right]d\sigma(\xi)$.

\subsubsection{Existence of a solution} As mentioned previously, neither the minorization condition~\cite{Kurano_1989,Hernandez_Lerma_1996,Wiecek_2019} nor the controllability condition~\cite{KM} apply in our situation. Instead, we exploit here the contraction
properties of the dynamics, with respect to Hilbert's
projective metric, together with the vanishing
discount approach, to show the existence. First let us show a preliminary result on metrics comparison:

\begin{lemma}\label{lemma::akian}
Let $\calD\subset \Relint(\Delta_n)$, $n\in\bbN$ and $x,y \in\calD$. Then, 
\begin{equation}
n\left\| x - y \right\|_\infty  \leq d_H(x,y) \Upsilon(\Diam_H(\calD))
\end{equation}
where $\Upsilon(d) = \frac{1}{d}e^{d}(e^{d} - 1)$.
\end{lemma}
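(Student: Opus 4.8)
The plan is to pass to logarithmic coordinates, in which Hilbert's metric becomes a span, and then to transfer the bound back to the $\ell^\infty$ norm by a mean value inequality. Throughout write $D:=\Diam_H(\calD)$ and $d:=d_H(x,y)\le D$, and set $t_i:=\log(x_i/y_i)$ for $1\le i\le n$, which is well defined because $x,y\in\Relint(\Delta_n)$.

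First I would identify $d$ with the span of the vector $t$. Directly from the definition, $d_H(x,y)=\max_{i,j}\log\frac{x_i y_j}{x_j y_i}=\max_i t_i-\min_i t_i$. The point is that this span, which is a priori only a seminorm, here controls each coordinate separately: since $x,y\in\Delta_n$ one has $\sum_i(x_i-y_i)=\sum_i y_i(e^{t_i}-1)=0$, and as $s\mapsto e^s-1$ has the sign of $s$ while the weights $y_i$ are positive, the family $(t_i)_i$ must contain both a nonnegative and a nonpositive entry. Hence $\min_i t_i\le 0\le\max_i t_i$, which combined with the span identity yields $|t_i|\le d$ for every $i$. This sign argument is really the crux of the statement.

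Next comes the coordinatewise estimate. For each $i$ one has $|x_i-y_i|=y_i\,|e^{t_i}-1|$. Using that $s\mapsto (e^s-1)/s$ is increasing on $\R$ and that $|t_i|\le d\le D$, I would write $|e^{t_i}-1|=|t_i|\,\frac{e^{t_i}-1}{t_i}\le |t_i|\,\frac{e^{D}-1}{D}\le |t_i|\,\Upsilon(D)$, whence $|x_i-y_i|\le \Upsilon(D)\,y_i\,|t_i|\le \Upsilon(D)\,d\,y_i$ for each $i$.

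It remains to combine these coordinatewise bounds. Summing over $i$ and using the normalization $\sum_i y_i=1$ gives $\|x-y\|_1\le d\,\Upsilon(D)$, so a fortiori $\|x-y\|_\infty\le d_H(x,y)\,\Upsilon(\Diam_H(\calD))$, which is the substance of the estimate the approach delivers; matching this with the exact prefactor of the displayed inequality is then a matter of constant bookkeeping between the $\ell^1$ and $\ell^\infty$ norms on $\Delta_n$. I expect the two delicate points to be the sign argument of the first step, which is what upgrades the span into control of each individual $|t_i|$, and the precise prefactor in the conclusion: one must check that the monotone mean value bound for $(e^s-1)/s$ on $[-D,D]$ together with the normalization $\sum_i y_i=1$ accounts correctly for the stated dependence on $n$ and $\Upsilon(\Diam_H(\calD))$.
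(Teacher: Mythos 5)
Your argument is correct and self-contained up to the $\ell^1$ estimate: the identification of $d_H(x,y)$ with the span of $t=(\log(x_i/y_i))_i$, the sign argument from $\sum_i y_i(e^{t_i}-1)=0$ forcing $\min_i t_i\le 0\le \max_i t_i$ (hence $|t_i|\le d$), and the monotone difference-quotient bound $|e^{t_i}-1|\le |t_i|\,(e^{D}-1)/D$ are all valid, and summing against $\sum_i y_i=1$ does give $\|x-y\|_1\le d_H(x,y)\,\tfrac{e^{D}-1}{D}\le d_H(x,y)\,\Upsilon(D)$. The genuine gap is your last step: the lemma claims $n\|x-y\|_\infty\le d_H(x,y)\,\Upsilon(D)$, and the norm comparison on $\bbR^n$ goes the wrong way --- one has $\|z\|_1\le n\|z\|_\infty$, never the reverse --- so your $\ell^1$ bound only yields the conclusion \emph{without} the factor $n$, and the ``constant bookkeeping between $\ell^1$ and $\ell^\infty$'' you defer can lose up to a factor $n$. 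The paper obtains the $n$ by a different mechanism that your coordinatewise computation never sees: it invokes \cite[Lemma~2.3]{Akian_2016} with the order-unit norm $\|z\|_u=\inf\{a>0: -au\le z\le au\}$ based at the barycenter $u=(1/n,\dots,1/n)$, for which $\|z\|_u=n\|z\|_\infty$, paying the price of a factor $e^{\max(d_T(x,u),d_T(y,u))}$ measuring the Thompson distance of $x,y$ to the barycenter, which is then bounded by $e^{\Diam_H(\calD)}$; only the final convexity step $e^{d}-1\le d\,(e^{D}-1)/D$ coincides with yours.

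Moreover, this is a structural obstruction rather than missing care: the factor $n$ genuinely requires the barycenter to be Hilbert-close to $\calD$, which is what the paper's bound $\max(d_H(x,u),d_H(y,u))\le \Diam_H(\calD)$ implicitly assumes (it is only justified when $u$ lies in, or near, $\calD$). Indeed, for arbitrary $\calD\subset\Relint(\Delta_n)$ the displayed inequality with the prefactor $n$ can fail: take $n=100$, $x=(0.9,\,0.1/99,\dots,0.1/99)$, $y=(0.8,\,0.2/99,\dots,0.2/99)$, $\calD=\{x,y\}$; then $d_H(x,y)=\Diam_H(\calD)=\log(2.25)$, so $d_H(x,y)\,\Upsilon(\Diam_H(\calD))=e^{d}(e^{d}-1)=2.8125$, while $n\|x-y\|_\infty=10$. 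So no amount of bookkeeping can upgrade your $\ell^1$ bound to the stated one without an extra hypothesis such as $u\in\calD$ (or replacing $\Diam_H(\calD)$ by $\Diam_H(\calD\cup\{u\})$, i.e., following the order-unit-norm route of the paper). On the other hand, the $\ell^1$ inequality you actually proved is true, is elementary (it bypasses \cite{Akian_2016} entirely), and is all that is needed downstream: since the reward is $M_r$-Lipschitz for the norm $\|\mu\|=\sum_k\|\mu^k\|_1$, your bound already delivers a Hilbert-metric Lipschitz constant of the form $M_r\,\Upsilon(\Diam_H(\calD_L))$ for $\mu\mapsto r(a,\mu,\xi)$, which is what the equi-Lipschitz lemma consumes. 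You should therefore strike the final sentence of your write-up and either state the $\ell^1$ version, or add the barycenter hypothesis and argue via the $u$-norm as the paper does.
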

\begin{proof}
We use the results in~\cite{Akian_2016}: Lemma 2.3 shows that for any vectors $u,x,y\in\calD$ such that there exist $a,b>0$ satisfying $ax\leq u\leq bx$ and $ay\leq u\leq by$, we have the following inequality:
$$\left\|x-y\right\|_u \leq \left(e^{d_T(x,y)} -1\right) e^{\max(d_T(x,u),d_T(y,u))} \enspace,$$
where $d_T$ denotes the Thompson distance, and $\left\|z\right\|_u = \inf\{a>0\mid -au\leq z\leq au\}$. In particular, by choosing $u = (1/n,\hdots 1/n)$ as the center of the simplex, $\left\|\cdot\right\|_u = n\left\|\cdot\right\|_\infty$.
Moreover, $d_T(\cdot,\cdot)\leq d_H(\cdot,\cdot)$ on $\Relint(\Delta_N^K)$, see~\cite[Eq. 2.4]{Akian_2016}.
Therefore,
$$
\begin{aligned}
n\left\|x-y\right\|_\infty &\leq \left(e^{d_H(x,y)} -1\right) e^{\max(d_H(x,u),d_H(y,u))}\\
&\leq \left(e^{d_H(x,y)} -1\right) e^{\Diam_H(\calD)}\enspace.
\end{aligned}
$$
We easily conclude using the fact that $f: x\mapsto e^{x} -1$ is a convex function, and so for all $ 0\leq x \leq \bar{x},\, f(x) \leq x \frac{e^{\bar{x}}-1}{\bar{x}}$.
\end{proof}

Applying~\Cref{lemma::akian}, we obtain that $\mu \mapsto r(a,\mu,\xi)$ is Lipschitz of constant $M^{\calD}_r := \frac{1}{K}M_r \Upsilon(\Diam_H(\calD_L))$ for the Hilbert metric.

Let us define the optimal infinite horizon discounted objective $v_\alpha$, defined as
\begin{equation}\label{eq::Valpha}
v_\alpha(\mu_0) = \sup_{\pi\in\Pi} \;\sum_{t\geq 1} \alpha^{t-1}r(\pi_t(\mu_t),\mu_t)\enspace,
\end{equation}
where $\alpha$ is the discount factor and $\mu_0$ is the initial distribution.
As a consequence of~\Cref{lemma::akian}, we obtain that the value functions
of the discounted problems constitute an equi-Lipschitz family:
\begin{lemma}[Equi-Lipschitz property]\label{lemma::equiLip}
Assume that~\ref{hypo::continuity_P}-\ref{hypo::bounded_r} hold.
Then, $(v_\alpha)_{\alpha\in(0,1)}$ is $\left(\frac{M^{\calD}_r}{1 - \kappa}\right)$-equi-Lipschitz on $\calD_L$ for the Hilbert metric, i.e., for all $\mu_0,\nu_0\in\calD_L$, 
$$
\left\vert v_\alpha(\mu_0) - v_\alpha(\nu_0)\right\vert 
\leq \frac{M^{\calD}_r}{1- \kappa} d_H(\mu^0,\nu^0)\enspace .
$$
\end{lemma}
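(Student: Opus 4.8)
The plan is to prove the estimate by combining the two ingredients already established: the dynamics contracts in Hilbert's metric (Birkhoff's theorem, \eqref{eq::contraction}), and, for fixed $(a,\xi)$, the reward $\mu\mapsto r(a,\mu,\xi)$ is $M^{\calD}_r$-Lipschitz for $d_H$ on $\calD_L$ (the remark following \Cref{lemma::akian}). Let $\kappa:=\sup_{(a,\xi)}\kappa(P(a,\xi))<1$ denote a uniform contraction ratio; under \ref{hypo::continuity_P} and \ref{hypo::positiveness_P}, together with compactness of $\calA\times\Xi$ and continuity of $\kappa(\cdot)=\tanh(\Diam_H(\cdot)/4)$, this supremum is attained strictly below $1$, and this is the $\kappa$ appearing in the statement.

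Rather than argue on trajectories directly (which would force me to transport an $\varepsilon$-optimal feedback policy from $\mu_0$ to $\nu_0$), I would run the value-iteration recursion $v^0_\alpha\equiv 0$, $v^{j+1}_\alpha=\BelOp_\alpha v^j_\alpha$, which converges pointwise to $v_\alpha$, and prove by induction that each $v^j_\alpha$ is $L_j$-Lipschitz on $(\calD_L,d_H)$. The induction requires that the one-step images stay in the domain, i.e.\ $\mu P(a,\xi)\in\calD_L$ whenever $\mu\in\calD_L$; I would check this from \eqref{eq::D_L} by observing that appending one more factor $P^k(a_0,\xi_0)$ to a length-$L$ product and re-absorbing its first factor into the base point again yields an element of the generating family of $\calD_L^k$, so that, $P^k$ being linear and $\calD_L^k$ a convex hull, $\calD_L^k$ is forward invariant.

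For the inductive step I would use that taking a maximum over $a\in\calA$ and integrating in $\xi$ are both non-expansive, so that, for $\mu,\nu\in\calD_L$,
\[
\bigl|v^{j+1}_\alpha(\mu)-v^{j+1}_\alpha(\nu)\bigr|\le \max_{a\in\calA}\int_{\Xi}\bigl(M^{\calD}_r+\alpha\kappa L_j\bigr)\,d_H(\mu,\nu)\,d\sigma(\xi)=(M^{\calD}_r+\alpha\kappa L_j)\,d_H(\mu,\nu),
\]
using the reward Lipschitz bound on the running payoff and $d_H(\mu P(a,\xi),\nu P(a,\xi))\le\kappa\,d_H(\mu,\nu)$ on the continuation value. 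Hence $L_{j+1}=M^{\calD}_r+\alpha\kappa L_j$ with $L_0=0$, giving $L_j=M^{\calD}_r\sum_{i=0}^{j-1}(\alpha\kappa)^i\le \frac{M^{\calD}_r}{1-\alpha\kappa}\le\frac{M^{\calD}_r}{1-\kappa}$, the last bound being uniform in $\alpha\in(0,1)$ since $\alpha\kappa<1$. Passing to the pointwise limit $v^j_\alpha\to v_\alpha$ preserves this common Lipschitz constant, which concludes.

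The genuinely delicate points are (i) securing a single uniform $\kappa<1$ rather than the pointwise ratios $\kappa(P(a,\xi))$ — this is exactly where \ref{hypo::continuity_P}, \ref{hypo::positiveness_P} and compactness of $\calA\times\Xi$ are used (under the weaker \ref{hypo::primitivity_P} one instead groups the steps into blocks of length $L$, each block contracting by $\kappa(Q^k_L)$, single steps being merely non-expansive, and reaches the same equi-Lipschitz conclusion with an adjusted constant); and (ii) remaining inside $\calD_L$ so that both the $M^{\calD}_r$ reward bound and the contraction are legitimately invoked, handled by the forward-invariance observation above. The alternative coupling argument — rolling out a common control from $\mu_0$ and $\nu_0$ and summing $M^{\calD}_r\sum_{t\ge 1}(\alpha\kappa)^{t-1}d_H(\mu_0,\nu_0)$ — gives the same constant but requires justifying that the control optimal for $\mu_0$ can be replayed admissibly from $\nu_0$; the value-iteration route sidesteps this, and is the one I would write up.
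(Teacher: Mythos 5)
Your proposal is correct and follows essentially the same route as the paper: a value-iteration induction on $(\calD_L,d_H)$ yielding the recursion $L_{j+1}=M^{\calD}_r+\alpha\kappa L_j$, hence the uniform bound $M^{\calD}_r/(1-\kappa)$, with the primitive case \ref{hypo::primitivity_P} reduced to the positive case by grouping steps into blocks of length $L$ — exactly the paper's time-rescaled reformulation with block transition $Q_L$ and aggregated reward $\tilde r$. Your added care about the uniform contraction ratio (via compactness of $\calA\times\Xi$ and continuity of $\kappa(\cdot)$) and the forward invariance of $\calD_L$ makes explicit two points the paper leaves to \Cref{lemma::D_L} and the Birkhoff discussion, but does not change the argument.
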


\begin{proof}
We first make the proof under the stronger assumption~\ref{hypo::positiveness_P}, and then deduce the general result.

We denote by $(v^j_\alpha)_{j\in\bbN}$ the sequence defined as $v^{j+1}_\alpha = \BelOp_\alpha v^j_\alpha$ and $v^0_\alpha \equiv 0$.
Let us assume that for a given $j\in\bbN$, $v^j_\alpha$ is $M^j_\alpha$-Lipschitz w.r.t the Hilbert metric , i.e., 
$$
\left\vert v^j_\alpha(\mu) - v^j_\alpha(\nu) \right\vert \leq M^j_\alpha d_H(\mu,\nu)\enspace.
$$
Then, for $\mu,\nu\in\calD_1\subset\Delta_K^N$, we have:
$$
\begin{aligned}
&\left\vert\BelOp_\alpha v^j_\alpha(\mu) - \BelOp_\alpha v^j_\alpha(\nu)\right\vert \\
&\leq \int_{\Xi} \left\vert
r(a,\mu,\xi) - r(a,\nu,\xi)
\right\vert
d\sigma(\xi) \\
&\quad + \alpha\int_{\Xi} \left\vert
v^j_\alpha(\mu P(a,\xi)) - v^j_\alpha(\nu P(a,\xi))
\right\vert
d\sigma(\xi)\\
& \leq M^{j+1}_\alpha d_H(\mu,\nu),
\end{aligned}
$$
with $
M^{j+1}_\alpha = M^{\calD}_r + \alpha\kappa M^j_\alpha
$. 
Therefore, for all $j\in\bbN$, $M^j_\alpha \leq M: = \frac{M^{\calD}_r}{1 - \kappa}$, which is independent of $j$ and $\alpha$. So, at the limit, $v_\alpha$ is $M$-equi-Lipschitz w.r.t. the Hilbert pseudo-metric.

To deduce the general result with~\ref{hypo::primitivity_P}, we define
\begin{itemize}
  \item $\widetilde{\calA}:= \calA^L$, $\widetilde{\Xi}:= \Xi^L$, $\tilde{\alpha}:=\alpha^L$,
  \item $\tilde{\phi}_\tau(\tilde{a}_{:\tau},\tilde{\xi}_{:\tau},\mu_0):=\mu_0 \prod_{1\leq t\leq \tau}Q(\tilde{a}_t,\tilde{\xi}_t)$,
  \item $\tilde{r}(a_{:L},\mu,\xi_{:L}):= \sum_{l\in[L]}\alpha^{l-1}r(a_l,\phi_l(a_{:l},\mu,\xi_{:l}),\xi_l)$,
  \item and $$\tilde{\BelOp}_\alpha v(\mu) = \max_{\tilde{a}\in\widetilde{\calA}} 
\left\{\begin{split}
&\int_{\tilde{\Xi}}\left[\tilde{r}(\tilde{a},\mu,\tilde{\xi})+v(\nu)\right]d\tilde{\sigma}(\tilde{\xi})\\
&\text{s.t} \;\nu = \mu Q_L(\tilde{a},\tilde{\xi})
\end{split}\right\}$$.
\end{itemize}
and observe that 
$$v_\alpha(\mu_0) =  \sum_{\tau\geq 1}\tilde{\alpha}^{\tau-1} \tilde{r}(\tilde{a}_\tau,\tilde{\phi}_\tau(\tilde{a}_{:\tau},\tilde{\xi}_{:\tau},\mu_0))\enspace.$$
We have rescaled the time ($\tau$ instead of $t$) so that the transition matrix between time $\tau$ and time $\tau+1$ is $Q_L(\tilde{a}_\tau,\tilde{\xi}_\tau)$. One $\tau$-time step corresponds to $L$ $t$-time steps.
As the transition $Q_L(\tilde{a},\tilde{\xi})$ is now positive, the proof is exactly the same as before, in the $\tau$-time space. 
\end{proof}

\begin{remark}
The result remains if the common noise $\xi_t$ is controlled, i.e., depends on the action $a_t$. However, if $\xi_t$ depends on the state, then there is no guarantee that $M^{j+1}_\alpha$ is bounded. In the latter case, it would require $\left\|v^j_\alpha\right\|_\infty$ to be uniformly bounded, which is not guaranteed (when $\alpha\to 1$, it goes to infinity in general).
\end{remark}

We are now able to prove the main result:
\begin{theorem}[Existence of a solution]\label{prop::ergodic_fixed_point}
  Assume that~\ref{hypo::continuity_P}-\ref{hypo::bounded_r} hold.
  Then, the ergodic eigenproblem
\begin{equation}\label{eq::ergodic_fixed_point}
g \OneF_{\calD_L} + h = \BelOp h
\end{equation}
admits a solution $h^*\in \Lip(\calD_L)\cap \vex(\calD_L)$ and $g^*\in\bbR$.
\end{theorem}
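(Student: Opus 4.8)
The plan is to use the vanishing-discount method of Lions, Papanicolaou and Varadhan, relying entirely on the a priori estimate already obtained in \Cref{lemma::equiLip}. Fix a reference point $\bar\mu\in\calD_L$ and, for each $\alpha\in(0,1)$, set $h_\alpha := v_\alpha - v_\alpha(\bar\mu)$. Since $v_\alpha$ solves $\BelOp_\alpha v_\alpha = v_\alpha$, substituting $\alpha v_\alpha(\nu) = \alpha h_\alpha(\nu) + \alpha v_\alpha(\bar\mu)$ inside the Bellman operator and subtracting $v_\alpha(\bar\mu)$ from both sides yields the normalized equation
\begin{equation*}
h_\alpha(\mu) + (1-\alpha)v_\alpha(\bar\mu) = \max_{a\in\calA}\int_\Xi \bigl[r(a,\mu,\xi) + \alpha\, h_\alpha(\mu P(a,\xi))\bigr]\,d\sigma(\xi)\enspace.
\end{equation*}
By construction $h_\alpha(\bar\mu)=0$, and \ref{hypo::bounded_r} gives $|r|\le M_r$ hence $|v_\alpha(\bar\mu)|\le M_r/(1-\alpha)$, so that $(1-\alpha)v_\alpha(\bar\mu)$ remains bounded uniformly in $\alpha$.

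Next I would extract limits by compactness. By \Cref{lemma::equiLip} the family $(h_\alpha)$ is $\tfrac{M^{\calD}_r}{1-\kappa}$-equi-Lipschitz for $d_H$, and since $h_\alpha(\bar\mu)=0$ it is also uniformly bounded by $\tfrac{M^{\calD}_r}{1-\kappa}\Diam_H(\calD_L)$. As $(\calD_L,d_H)$ is a compact metric space (\Cref{lemma::D_L}, together with the coincidence of the Hilbert and Euclidean topologies on $\Relint(\Delta_N^K)$), the Arzelà--Ascoli theorem provides, along any sequence $\alpha_j\uparrow 1$, a subsequence (not relabeled) with $h_{\alpha_j}\to h^*$ uniformly. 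Since $(1-\alpha_j)v_{\alpha_j}(\bar\mu)$ is bounded, after passing to a further subsequence we may also assume $(1-\alpha_j)v_{\alpha_j}(\bar\mu)\to g^*$ for some $g^*\in\bbR$.

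The crux is to pass to the limit through the nonlinear Bellman operator, i.e. to show that the right-hand side above converges to $\BelOp h^*(\mu)$. Since $\alpha_j\to1$, $h_{\alpha_j}\to h^*$ uniformly and $h^*$ is bounded, the integrands $r(a,\mu,\xi)+\alpha_j h_{\alpha_j}(\mu P(a,\xi))$ converge to $r(a,\mu,\xi)+h^*(\mu P(a,\xi))$ uniformly in $(a,\xi)$; dominated convergence then moves the limit inside $\int_\Xi\cdot\,d\sigma$, and uniform convergence of the integrals in $a$, combined with compactness of $\calA$ and continuity in $a$ granted by \ref{hypo::continuity_P}, lets the limit commute with $\max_{a\in\calA}$. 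Letting $j\to\infty$ therefore gives $h^*(\mu)+g^* = \BelOp h^*(\mu)$ for every $\mu\in\calD_L$, which is exactly \eqref{eq::ergodic_fixed_point}. This interchange of the limit with $\max$ and the integral is the one place requiring care; everything else is reduced to the equi-Lipschitz bound of \Cref{lemma::equiLip}, which does the real work.

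It remains to verify the regularity of $h^*$. As a uniform limit of $\tfrac{M^{\calD}_r}{1-\kappa}$-Lipschitz functions for $d_H$, the function $h^*$ is itself $d_H$-Lipschitz, hence Euclidean-Lipschitz since the two metrics are bi-Lipschitz equivalent on the compact set $\calD_L\subset\Relint(\Delta_N^K)$; thus $h^*\in\Lip(\calD_L)$. For convexity, recall that $\BelOp_\alpha$ preserves convexity (the reward \eqref{eq::shape_r} and dynamics \eqref{eq::linear_transition} being affine in $\mu$, and $\max$ preserving convexity), so each iterate $v^j_\alpha$ is convex, whence $v_\alpha$ and $h_\alpha$ are convex; the uniform limit of convex functions being convex, we conclude $h^*\in\vex(\calD_L)$, which completes the proof.
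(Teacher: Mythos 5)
Your proof is correct and follows essentially the same route as the paper: the vanishing-discount normalization $h_\alpha = v_\alpha - v_\alpha(\bar\mu)$, the equi-Lipschitz bound of \Cref{lemma::equiLip} feeding Arzel\`a--Ascoli on the compact metric space $(\calD_L,d_H)$, passage to the limit in the fixed-point equation $\BelOp(\alpha v_\alpha)=v_\alpha$, and convexity by preservation under $\BelOp_\alpha$ and under limits. You merely spell out details the paper leaves implicit (boundedness of $(1-\alpha)v_\alpha(\bar\mu)$, subsequence extraction for $g^*$, and the interchange of the limit with $\max_{a}$ and $\int_\Xi$, which the paper compresses into ``continuity of $\BelOp$'', i.e.\ its sup-norm nonexpansiveness).
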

\begin{proof}
Let us define a reference distribution $\overline{\mu} \in \Delta_N^K$, $g^*_\alpha=(1-\alpha)v_\alpha(\overline{\mu})$ and $h^*_\alpha = v_\alpha - v_\alpha(\overline{\mu}) \OneF_{\calD_L}$. Then, as $v_\alpha$ is equi-Lipschitz on $\calD_L$ (\Cref{lemma::equiLip}), $h^*_\alpha$ is equi-bounded and equi-Lipschitz on $\calD_L$ (in particular equi-continuous).
 By the Arzelà-Ascoli theorem, $h^*_\alpha \to h^* \in \Conti(\calD_L)$.

Finally, from the discounted reward approach, we get $\BelOp(\alpha v_\alpha) = v_\alpha$, 
therefore 
$$\frac{g^*_\alpha}{1-\alpha}\OneF_{\calD_L} + h^*_\alpha = \BelOp\left(\frac{\alpha g^*_\alpha}{1-\alpha}\OneF_{\calD_L} + \alpha h^*_\alpha\right)\enspace.$$ By the additive homogeneity property of the Bellman function, $g^*_\alpha \OneF_{\calD_L} + h^*_\alpha = \BelOp(\alpha h^*_\alpha)\enspace.$ The fixed-point equation~\eqref{eq::ergodic_fixed_point} is then obtained by continuity of the Bellman operator $\BelOp$.
  
To conclude, $h^*$ is convex since  $v_\alpha$ is convex and the pointwise convergence preserves the convexity.
\end{proof}

\begin{prop}\label{prop::verification_theorem}For any solution $(g^*,h^*)$ of~\eqref{eq::ergodic_fixed_point}, $g^*$ satisfies~\eqref{eq::optimality_criteria}, and
a maximizer $a^*(\cdot) \in \argmax \BelOp h^*$ defines an optimal \emph{stationary} policy for the average gain problem.
\end{prop}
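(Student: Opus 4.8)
The plan is to prove the two inequalities $g^*(\mu_0)\le g^*$ and $g^*(\mu_0)\ge g^*$ separately, by a standard verification (telescoping) argument anchored on the eigen-identity $g^*\OneF_{\calD_L}+h^*=\BelOp h^*$. The only structural ingredient beyond this identity is that $h^*$ is \emph{bounded} on $\calD_L$: this holds because $h^*\in\Lip(\calD_L)$ by \Cref{prop::ergodic_fixed_point} and $\calD_L$ is compact with finite Hilbert diameter. I would also use \Cref{lemma::D_L}, which ensures every trajectory stays in $\calD_L$ for $t\ge 1$, so that $h^*$ may be legitimately evaluated along the state process and the boundary terms below remain uniformly bounded.

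For the upper bound, fix an arbitrary policy $\pi\in\Pi$. Since at any state $\mu$ the action $\pi_t(\mu)$ is feasible in the maximization defining $\BelOp h^*(\mu)$, the eigen-identity yields the pointwise inequality
$$
\int_\Xi\bigl[r(\pi_t(\mu),\mu,\xi)+h^*(\mu P(\pi_t(\mu),\xi))\bigr]\,d\sigma(\xi)\;\le\; g^*+h^*(\mu)\enspace.
$$
Evaluating this along the (random) trajectory generated by $\pi$, taking the total expectation over the common noise, and recalling $r(a,\mu)=\int_\Xi r(a,\mu,\xi)\,d\sigma(\xi)$, the transition terms telescope. Summing over $t=1,\dots,T$ therefore leaves
$$
\frac{1}{T}\sum_{t=1}^{T}\Esp\bigl[r(\pi_t(\mu_t),\mu_t)\bigr]\;\le\; g^*+\frac{1}{T}\bigl(\Esp[h^*(\mu_0)]-\Esp[h^*(\mu_T)]\bigr)\enspace.
$$
As $\|h^*\|_\infty<\infty$ on $\calD_L$, the boundary term is $O(1/T)$; letting $T\to\infty$ gives $\liminf_T\frac1T\sum_t r(\pi_t(\mu_t),\mu_t)\le g^*$ and, taking the supremum over $\pi$, $g^*(\mu_0)\le g^*$.

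For attainment and the matching lower bound, I would first produce a \emph{stationary} feedback $a^*(\cdot)\in\argmax\BelOp h^*$. The maximum is attained at each $\mu$ because $\calA$ is compact and, by \ref{hypo::continuity_P}, $(a,\xi)\mapsto P(a,\xi)$ (hence the integrand) is continuous; a measurable selector $a^*(\cdot)$ then exists by a standard measurable-selection argument (Berge / Kuratowski--Ryll-Nardzewski). Along the trajectory generated by $a^*$ the displayed inequality becomes an \emph{equality}, so the same telescoping now reads
$$
\frac{1}{T}\sum_{t=1}^{T}\Esp\bigl[r(a^*(\mu_t),\mu_t)\bigr]\;=\;g^*+\frac{1}{T}\bigl(\Esp[h^*(\mu_0)]-\Esp[h^*(\mu_T)]\bigr)\;\xrightarrow[T\to\infty]{}\;g^*\enspace.
$$
Thus the stationary policy $a^*$ secures the average gain $g^*$, giving $g^*(\mu_0)\ge g^*$. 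Combined with the first step this proves $g^*(\mu_0)=g^*$, independent of $\mu_0$, and exhibits $a^*$ as an optimal stationary policy.

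I expect the only genuine subtleties to be bookkeeping ones rather than the telescoping, which is routine. The first is the measurable selection of $a^*(\cdot)$, handled by compactness of $\calA$ and continuity \ref{hypo::continuity_P}. The second, which I view as the main point of care, is reconciling the pathwise $\liminf$ appearing in \eqref{eq::optimality_criteria} with the \emph{expected} telescoped average used above: in the noiseless case the trajectory is deterministic and the argument applies verbatim, whereas under common noise one works with the expected reward and uses the forward invariance of $\calD_L$ (\Cref{lemma::D_L}) to keep $h^*$ bounded along the evolution. Ensuring that the $\liminf$ inequality survives this expectation step is where I would spend the most attention.
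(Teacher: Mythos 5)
Your proof is correct and is essentially the paper's own argument: the paper phrases your telescoping as iterated Kolmogorov operators $(\BelOp^{\pi_1}\circ\cdots\circ\BelOp^{\pi_t})h^*\leq h^*+tg^*\OneF_{\calD_L}$, controls the boundary term via $\underline{h}^*=\min_{\mu\in\calD_L}h^*(\mu)$ exactly as you do via $\|h^*\|_\infty$, and obtains the maximizer $a^*(\cdot)$ from continuity of $h^*$ on the compact set $\calD_L$. Your additional remarks (measurable selection, reconciling the pathwise $\liminf$ with the expected average under common noise) are sound points of care that the paper handles implicitly by working directly with the operators, which encode the expectation over the noise.
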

\begin{proof}
Let $\pi \in \Pi$ be a policy.
By definition, for every $t$, $\BelOp^{\pi_t} h^* \leq \BelOp h^* = h^*+g^*\OneF_{\calD_L}$. Therefore, iterating the Kolmogorov operator, we obtain
$$(\BelOp^{\pi_1} \circ \hdots \circ \BelOp^{\pi_t}) h^* \leq h^* + t g^*\OneF_{\calD_L} \enspace.$$ 
Let $\underline{h}^* := \min_{\mu\in\calD_L} h^*(\mu)$ be the minimum of $h^*$. Then, $0_{\calD_L}\leq h-\underline{h}^*\OneF_{\calD_L}$, and so
$(\BelOp^{\pi_1} \circ \hdots \circ \BelOp^{\pi_t}) (0_{\calD_L}) \leq h^* + (t g^* -\underline{h}^*) \OneF_{\calD_L} \enspace.$
Finally, $$\liminf_{t\to\infty} \frac{1}{t}(\BelOp^{\pi_1} \circ \hdots \circ \BelOp^{\pi_t}) (0_{\calD_L}) (\mu_0) \leq g^*\enspace.$$
Any strategy has an average reward lower than $g^*$. As we have proved that the bias function $h^*$ is continuous on $\calD_L$, a maximizer $a^*(\mu)$ can be found for any state $\mu$, and so playing the strategy $a^*(\mu)$ achieves the best possible average gain $g^*$.
\end{proof}
In particular, the constant $g^*$ in~\eqref{eq::ergodic_fixed_point}
is unique, and it coincides with the optimal average long-term reward,
for all choices of the initial state $\mu_0$. However, even if the payoff $g^*$ is unique, the bias function $h^*$ is not (and so neither is the optimal policy).

\begin{remark}
In~\cite{Bauerle_2023}, B\"auerle also used the vanishing discount approach, but here we do not assume a priori the equi-boundedness of the optimal discounted objective functions $v_\alpha$. Instead, using a contraction argument on the dynamics, we obtained that $(v_\alpha)_{\alpha\in(0,1)}$ is equi-Lipschitz (see~\Cref{lemma::equiLip}). In particular, it entails that any optimal eigenvector $h^*$ is Lipschitz (and not only upper semi-continuous).
\end{remark}

\emph{In the sequel, we restrict the study to deterministic problem (absence of common noise, i.e., $\Xi$ is reduced to a singleton).}

\subsubsection{Non-uniqueness of the solution}
As discussed in the introduction, classical approaches to the infinite dimensional ergodic problem rely on a geometric ergodicity/Doeblin type condition. This condition entails that the bias function is unique up to an additive constant. We next show that under~\ref{hypo::continuity_P} to~\ref{hypo::bounded_r}, the bias function may not be unique, implying that the present results cannot be derived from such approaches.

To get a non-unique bias,  we will construct instances where there exist several ``attractor'' states, and where a family of strategies can be found so that each of them secures the optimal mean payoff. Then, different attractors lead to different bias functions.
To illustrate this fact, we introduce in~\Cref{ex::non_uniqueness} a deterministic model satisfying~\ref{hypo::continuity_P} to~\ref{hypo::bounded_r}. Note that taking the same dynamics as in the example but without node 2 can also lead to a non-unique solution of the eigenproblem as long as we allow for a more general form of reward $r(a,\mu)$. Here, we aim at fitting exactly with our application case by considering that the reward function satisfies~\cref{eq::shape_r}.
\begin{example}[Non-uniqueness of the eigenvector]\label{ex::non_uniqueness}
\begin{center}
\begin{tikzpicture} [shorten >=1pt, auto,
    node distance=3cm, scale=0.5, 
    transform shape, align=center, 
    state/.style={circle, draw, minimum size=1cm}]
\node (q1) [state] {\Huge $1$};
\node (q2) [state, right = of q1] {\Huge $2$};
\node (q3) [state, right = of q2] {\Huge $3$};
\path [-stealth, thick]
    (q1) edge [bend left=15] node [above=0.1cm] {\huge $a$} (q2)
    (q2) edge [bend left=15] node [below=0.1cm] {\huge $1-a$} (q1)
    (q2) edge [bend left=15] node [above=0.1cm] {\huge $a$} (q3)
    (q3) edge [bend left=15] node [below=0.1cm] {\huge $1-a$} (q2)
    (q1) edge [loop left] node[left=0.2cm] {\huge $1 - a$}    (   )
    (q3) edge [loop right] node[right=0.2cm] {\huge $a$}   (   );    
\end{tikzpicture}
\end{center}
Let us consider the dynamical system described by the following transition matrix:
$$P(a)=\begin{bmatrix}
1-a & a & 0\\
1-a & 0 & a\\
0 & 1-a & a
\end{bmatrix},$$
where the $a$ is supposed to belong to
the action space $\calA$, which is of the form
$$\calA = [a_0,a_1]\;,\quad 0 < a_0 < 1/2 \;\text{ and }\; a_1 = 1 - a_0\enspace .$$
We consider the following unitary reward $\theta(\cdot)$: 
$$\theta(a)_n = \begin{cases}
    1-a \text{ if } n = 1,\\
    0\;\; \text{ if } n = 2,\\
    a\;\; \text{ if } n = 3,
\end{cases}$$
The reward function is then $r(a,\mu) = (1-a)\mu_1 + a \mu_3$ for any $\mu\in\Delta_3$, and 
$$r(a,\mu P(a)) = (1-a)^2 (1 - \mu_3) + a^2(1-\mu_1)\enspace.$$
\end{example}

In the sequel, we work in the sub-simplex $\Delta^{\leq}_3 := \{(x,y)\in\bbR^2_{\geq 0} \mid x+y\leq 1\}$, considering that $\mu_2$ can be reconstructed as $\mu_2 = 1-\mu_1 - \mu_3$.

\paragraph{The associated ergodic eigen problem.} For any real-valued function $v: \Delta^{\leq}_3 \to \bbR$, let us define the Bellman operator $\BelOp$ as 
$$\BelOp v(\mu_1,\mu_3) = \max_{a\in\calA} \left\{\begin{split} 
&(1-a)^2 (1 - \mu_3) + a^2(1-\mu_1) \\
&+ v\left((1-a)(1-\mu_3), a(1-\mu_1)\right)
\end{split}
\right\}\enspace.$$
In~\Cref{ex::non_uniqueness}, the transition $a\in\calA \mapsto P(a)$ is linear. 
Moreover, the transition matrix over two time steps is then
$$(P(a))^2=\begin{bmatrix}
(1-a)^2+a(1-a) & a(1-a) & a^2\\
(1-a)^2 & 2a(1-a) & a^2\\
(1-a)^2 & a(1-a) & a^2 + a(1-a)
\end{bmatrix}
$$
and has positive coefficients. Therefore, the transition matrix $P(a)$ satisfies the primitivity assumption~\ref{hypo::primitivity_P} for all $a\in\calA$.
Using~\Cref{prop::ergodic_fixed_point}, the ergodic eigenproblem
\begin{equation}\label{eq::BelOp_example}
g 1_{\calD_1} + h = \BelOp h
\end{equation}
admits a solution $h^*\in \Lip(\calD_1)\cap \vex(\calD_1)$ and $g^*\in\bbR$, where $\calD_1$ is defined one can construct the effective domain $\calD_1$ as in~\eqref{eq::D_L}.
As the quantity in the maximum is convex, for any convex function $v :\calD_1 \to \bbR$, the maximum value in $\BelOp v$ is obtained for $a=a_0$ or $a=a_1$.
Therefore, in the sequel, we restrict wlog the state space to be $\calA = \{a_0,a_1\}$.

\paragraph{Steady states.} 
Let $k\in\{0,1\}$. The equilibrium distribution $\hat{\mu}$ achieved by a constant decision $a_k$ is given by the equation $\hat{\mu} P(a_k) = \hat{\mu}$, which has a unique solution:
\begin{equation}\label{eq::steady_states}
    \hat{\mu}^k_1 = \frac{(1-a_k)^2}{1-a_k(1-a_k)}, \; \hat{\mu}^k_3 = \frac{a_k^2}{1-a_k(1-a_k)} \enspace.
\end{equation}

\paragraph{Bias function for the Kolmogorov operator.} Let us define $\BelOp^k$ the Kolmogorov operator associated to the constant strategy $\pi:\mu\mapsto a_k$, i.e.,
$$
\begin{aligned}
\BelOp^k v(\mu_1,\mu_3) =& (1-a_k)^2 (1 - \mu_3) + a_k^2(1-\mu_1) \\
&+ v\left((1-a_k)(1-\mu_3), a_k(1-\mu_1)\right)\enspace.
\end{aligned}$$
Then, the linear function $h^k(\mu_1,\mu_3) = \alpha^k \mu_1 + \beta^k \mu_3$ and the gain $g^k$ are solutions of 
\begin{equation}\label{eq::Kolmo}
h^k(\mu_1,\mu_3) + g^k = \BelOp^k h^k(\mu_1,\mu_3), \; (\mu_1,\mu_3)\in\calD_1
\end{equation}
if and only $g^k$, $\alpha^k$ and $\beta^k$ satisfy the following system
$$\left\{\begin{aligned}
g^k &= (1-a_k)^2 + a_k^2 + (1-a_k)\alpha^k + a_k\beta^k \\
\alpha^k &= - a_k^2 - a_k\beta^k\\
\beta^k &= - (1-a_k)^2 - (1-a_k)\alpha^k\\
\end{aligned}
\right.\enspace,$$
where the unique solution of the latter system is given by 
\begin{equation}\label{eq::sol_kolmo}
\begin{aligned}
g^k &= \frac{a_k^3 + (1-a_k)^3}{1 - a_k(1-a_k)},\\
\alpha^k &= \frac{a_k(1-a_k)^2 - a_k^2}{1 - a_k(1-a_k)},\\
\beta^k &= \frac{(1-a_k)a_k^2 - (1-a_k)^2}{1 - a_k(1-a_k)}\enspace.
\end{aligned}
\end{equation}
Note that $g^0 = g^1$ since $a_0 +a_1 = 1$, and we simply denoted it by $g^*$.

\paragraph{Solution for the ergodic eigen problem.} We now exhibit a family of eigenvectors where each of them constitutes a solution to the ergodic eigenproblem associated with~\Cref{ex::non_uniqueness}:
\begin{theorem}[Non-uniqueness of the eigenvector]
Let $v^k:\calD\to\bbR$, $k\in\{0,1\}$, be defined as
\begin{equation}
v^k(\mu_1,\mu_3) = \hat{h}^{k0}(\mu_1,\mu_3)\vee \hat{h}^{k1}(\mu_1,\mu_3) \vee \hat{h}^{k2}(\mu_1,\mu_3)\enspace,
\label{eq::bias_vk}
\end{equation}
with
\begin{itemize}
\item[$\diamond$] $\hat{h}^{ij}(\cdot,\cdot) := h^{j}(\cdot,\cdot) - h^{j}(\hat{\mu}^i_1, \hat{\mu}^i_3), \;i,j \in\{0,1\}$,
\item[$\diamond$] $\hat{h}^{k2}(\cdot, \cdot) := \BelOp^{1-k} \hat{h}^{kk} (\cdot,\cdot) - g^*$.
\end{itemize}
Then, for any $\lambda\in[0,1]$, the couple $(v^{\lambda},g^*)$ is solution the ergodic eigenproblem~\eqref{eq::BelOp_example} -- corresponding to~\Cref{ex::non_uniqueness} -- with $g^*$ defined in~\eqref{eq::sol_kolmo} and 
$$v^{\lambda}(\mu_1,\mu_3) := \left(v^0(\mu_1,\mu_3) - \tfrac{\lambda}{1-\lambda}\right) \vee \left(v^1(\mu_1,\mu_3) - \tfrac{1-\lambda}{\lambda}\right)\enspace.$$
\end{theorem}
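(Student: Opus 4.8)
The plan is to exploit that on convex functions the Bellman operator behaves as a \emph{max-plus linear} map, which collapses the whole family $(v^\lambda)_{\lambda\in[0,1]}$ onto its two extreme members $v^0$ and $v^1$. The single-action operators $\BelOp^0,\BelOp^1$ — each of the form $\BelOp^k v=r(a_k,\cdot)+v(\cdot\,P(a_k))$, an affine reward plus a precomposition with a linear map — are additively homogeneous ($\BelOp^k(v-c)=\BelOp^k v-c$), monotone, and distributive over the pointwise maximum ($\BelOp^k(v\vee w)=\BelOp^k v\vee\BelOp^k w$). Since for convex $v$ the maximum defining $\BelOp v$ is attained at $a_0$ or $a_1$, one has $\BelOp=\BelOp^0\vee\BelOp^1$ on convex functions, so $\BelOp$ inherits homogeneity and distributivity. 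As $v^0,v^1$ (hence $v^\lambda$) are convex, being suprema of affine maps, I may write $v^\lambda=(v^0-c_0)\vee(v^1-c_1)$ with $c_0=\tfrac{\lambda}{1-\lambda}$, $c_1=\tfrac{1-\lambda}{\lambda}$, and obtain
\[
\BelOp v^\lambda=(\BelOp v^0-c_0)\vee(\BelOp v^1-c_1).
\]
Thus, once $\BelOp v^0=v^0+g^*\OneF_{\calD_1}$ and $\BelOp v^1=v^1+g^*\OneF_{\calD_1}$ are established, the same relation for $v^\lambda$ follows for every $\lambda$ (the cases $\lambda\in\{0,1\}$ degenerate to $v^0$ or $v^1$). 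It therefore suffices to treat the two base cases, and by the symmetry $a_0\leftrightarrow a_1$, $\mu_1\leftrightarrow\mu_3$ of the model they are equivalent, so I would prove only $\BelOp v^0=v^0+g^*\OneF_{\calD_1}$.

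For $v^0=\hat{h}^{00}\vee\hat{h}^{01}\vee\hat{h}^{02}$, distributivity expands $\BelOp v^0=\bigvee_{a\in\{0,1\}}\bigvee_{j\in\{0,1,2\}}\BelOp^a\hat{h}^{0j}$, a supremum of six affine functions. Three of them are pinned down exactly: the Kolmogorov identity $\BelOp^k h^k=h^k+g^*$ from \eqref{eq::sol_kolmo} together with homogeneity gives $\BelOp^0\hat{h}^{00}=\hat{h}^{00}+g^*$ and $\BelOp^1\hat{h}^{01}=\hat{h}^{01}+g^*$, while the definition $\hat{h}^{02}=\BelOp^1\hat{h}^{00}-g^*$ gives $\BelOp^1\hat{h}^{00}=\hat{h}^{02}+g^*$. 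These three already reconstitute $v^0+g^*=\bigvee_{j}(\hat{h}^{0j}+g^*)$, whence the easy inequality $\BelOp v^0\ge v^0+g^*\OneF_{\calD_1}$ is obtained for free.

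The reverse inequality is the crux and the only real computation: the three leftover affine functions $\BelOp^0\hat{h}^{01}$, $\BelOp^0\hat{h}^{02}$ and $\BelOp^1\hat{h}^{02}$ must all lie below the piecewise-affine envelope $v^0+g^*$ on $\calD_1$. I would evaluate them from the explicit linear forms $h^0,h^1$ of \eqref{eq::sol_kolmo}; for instance $\BelOp^0 h^1(\mu)=\tfrac{2a_0^2a_1^2}{1-a_0a_1}(2-\mu_1-\mu_3)$, and the remaining two result from one further application of $\BelOp^0$ or $\BelOp^1$. To check an inequality $f\le\max_{j}(\hat{h}^{0j}+g^*)$ with $f$ affine, I would use the subdivision of $\calD_1$ into the polyhedral cells on which each $\hat{h}^{0j}$ realises the maximum; on each cell the inequality becomes affine-versus-affine and is verified at its finitely many vertices. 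The main obstacle lies precisely here: one must first identify the effective domain $\calD_1$ from \eqref{eq::D_L} and the geometry of the three cells, since the inequalities need only hold on $\calD_1$, the proper subset of $\Delta^{\leq}_3$ into which the contracting dynamics pushes the state, rather than on all of $\Delta^{\leq}_3$.
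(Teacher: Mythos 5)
Your proposal is correct and takes essentially the same route as the paper's proof: reduction to the extreme actions $\{a_0,a_1\}$ via convexity, the three exact identities $\BelOp^0\hat{h}^{00}=\hat{h}^{00}+g^*$, $\BelOp^1\hat{h}^{01}=\hat{h}^{01}+g^*$ and $\BelOp^1\hat{h}^{00}=\hat{h}^{02}+g^*$ yielding $\BelOp v^0\geq v^0+g^*\OneF_{\calD_1}$, domination of the leftover terms for the reverse inequality, symmetry for $v^1$, and max-plus combination for the family $v^\lambda$. The only deviation is organizational: where the paper kills $\BelOp^0\hat{h}^{01}$ by the global constant-difference identity $\BelOp^0\hat{h}^{01}-\BelOp^1\hat{h}^{00}=g^*(\hat{\mu}^0_3-\hat{\mu}^0_1)\leq 0$, you fold it into the same finite polyhedral vertex check as $\BelOp^0\hat{h}^{02}$ and $\BelOp^1\hat{h}^{02}$ --- a verification the paper itself leaves at the level of ``one can show'', so your deferral matches the paper's own granularity, and your sample computation $\BelOp^0 h^1(\mu)=\tfrac{2a_0^2a_1^2}{1-a_0a_1}(2-\mu_1-\mu_3)$ checks out.
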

\begin{proof}
As first observation, the couple $(h^k,g^*)$, solution of~\eqref{eq::Kolmo}, is not solution of the ergodic eigenproblem~\eqref{eq::BelOp_example}. 
Therefore, let us try to construct a solution as a mixture  of $h^0$ and $h^1$. To this purpose, let us define the function the function $u^0: \calD_1 \to \bbR$ as 
$$
u^0(\mu_1,\mu_3) = \hat{h}^{00}(\mu_1,\mu_3)\vee \hat{h}^{01}(\mu_1,\mu_3) \enspace.
$$
For $(\mu_1,\mu_3)\in\calD_1$, the value of $\BelOp u^0(\mu_1,\mu_3)$ is given by the maximum of 4 quantities:
\begin{enumerate}[label=(\roman*)]
    \item $\BelOp^0 \hat{h}^{00} (\mu_1,\mu_3) = \hat{h}^{00} (\mu_1,\mu_3) + g^*$,
    \item $\BelOp^0 \hat{h}^{01} (\mu_1,\mu_3) $,
    \item $\BelOp^1 \hat{h}^{00} (\mu_1,\mu_3) $,
    \item $\BelOp^1 \hat{h}^{01} (\mu_1,\mu_3) = \hat{h}^{01} (\mu_1,\mu_3) + g^*$,
\end{enumerate}
The equality in (i) and (iv) comes from the fact that $\hat{h}^{00}$ and $\hat{h}^{01}$ are solutions for~\eqref{eq::Kolmo}.
Besides, we can prove using basic algebra that 
$$\BelOp^0 \hat{h}^{01} (\mu_1,\mu_3) - \BelOp^1 \hat{h}^{00} (\mu_1,\mu_3) = g^*(\hat{\mu}^0_3 - \hat{\mu}^0_1)\leq 0\enspace.$$
Therefore, the maximum is obtained either with (i), (iii) or (iv). We consider now the function
\begin{equation}
v^0(\mu_1,\mu_3) = \hat{h}^{00}(\mu_1,\mu_3)\vee \hat{h}^{01}(\mu_1,\mu_3) \vee \hat{h}^{02}(\mu_1,\mu_3)\enspace,
\label{eq::bias_v0}
\end{equation}
with $\hat{h}^{02}(\mu_1,\mu_3) := \BelOp^1 \hat{h}^{00} (\mu_1,\mu_3) - g^*$.
By construction, $v^0 = \BelOp u^0 - g^* $.
Moreover, one can show that $\BelOp \hat{h}^{02}(\mu_1,\mu_3) - g^*\leq v^0 (\mu_1,\mu_3)$. 
Therefore, for all $(\mu_1,\mu_3)\in\calD_1$,
$$\BelOp v^0(\mu_1,\mu_3) = \BelOp u^0(\mu_1,\mu_3) = v^0(\mu_1,\mu_3) - g^*\enspace.$$
As a conclusion, $(v^0,g^*)$ is a solution of~\eqref{eq::BelOp_example}.

By symmetry of the problem, we can construct the function $v^1(\mu_1,\mu_3) = v^0(\mu_3,\mu_1)$, and $(v^1,g^*)$ is a different solution of~\eqref{eq::BelOp_example}. Finally, each max-plus combination of $v^0$ and $v^1$ also constitutes a solution of the ergodic eigenproblem.
\end{proof}

We display in~\Cref{fig::ex_non_uniqueness} the eigenvector $v^0$, $v^{1/2}$ and $v^1$, obtained numerically (using the RVI procedure, see~\Cref{algo::RVI}), as with the eigenvector $v^0$, obtained theoretically (see above).


\begin{figure}[!ht]
    \centering
    \includegraphics[width=0.99\linewidth]{./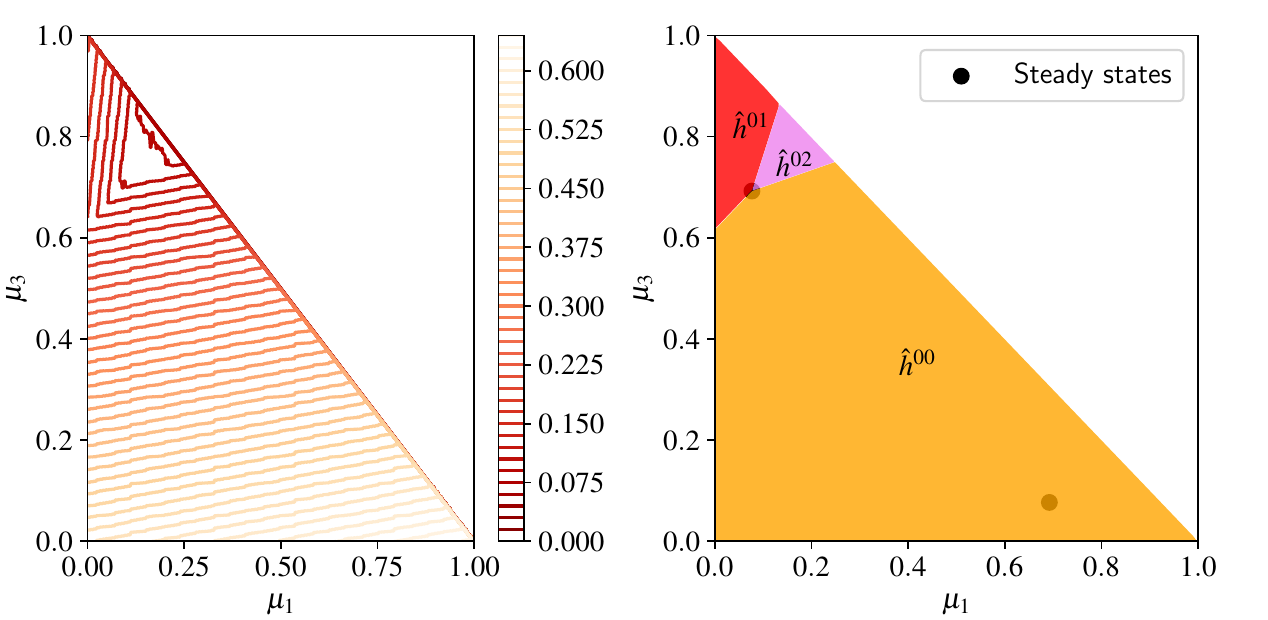}
    \includegraphics[width=0.99\linewidth]{./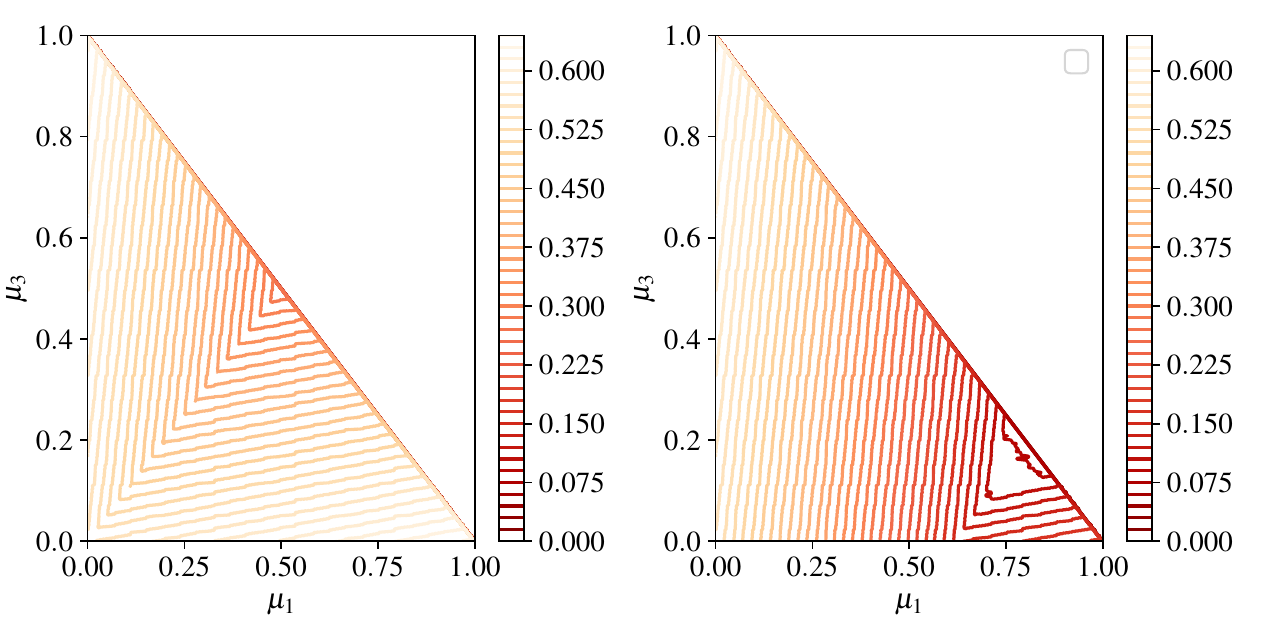}
    \caption{Eigenvectors for~\Cref{ex::non_uniqueness} with $\calA = [0.25,0.75]$.\\
    (Upper left): the eigenvector $v^0$ (obtained by the RVI algorithm, see~\Cref{sec::resolution}). (Upper right): the theoretical $v^0$ found in~\eqref{eq::bias_v0}, showing the two steady states to which each of optimal strategies converges. (Lower left): the eigenvector $v^{1/2}$. (Lower right): the eigenvector $v^1$. }
    \label{fig::ex_non_uniqueness}
\end{figure}

\section{Numerical resolution}\label{sec::resolution}
We present in this section two iterative algorithms in order to numerically solve the ergodic eigenproblem~\eqref{eq::ergodic_fixed_point}.
\subsection{Relative Value Iteration with Krasnoselskii-Mann damping}
Relative Value Iteration (RVI) has been extensively studied to solve unichain finite-state MDP~\cite{Puterman_1994,Bertsekas_1998}.
Simplicial state-spaces appear in particular in the definition of \emph{belief state} for partially observable MDP~\cite{Hauskrecht_2000}. For such continuous state-spaces, a discretization must be done as a prerequisite to RVI algorithm. Here, we define a regular grid $\Sigma$ of the simplex $\Delta_N^K$, and $\BelOp^\Sigma$ the Bellman Operator with a linear point approximation on the grid $\Sigma$, achieved by a Freudenthal triangulation~\cite{Lovejoy_1991}. 
With this framework, we have the following property:
\begin{prop}[\cite{Hauskrecht_2000}, Thm 12] For any $v\in\vex(\Delta_N^K)$, $$\BelOp v \leq \BelOp^\Sigma v\enspace.$$
\end{prop}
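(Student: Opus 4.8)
The plan is to reduce the claim to a single convexity fact---that the piecewise-linear interpolant of a convex function lies above the function---and then to push this pointwise domination through the Bellman update. First I would make the operator $\BelOp^\Sigma$ explicit. The Freudenthal triangulation subdivides $\Delta_N^K$ into sub-simplices whose vertices all lie on the grid $\Sigma$; every $\mu\in\Delta_N^K$ belongs to (at least) one such cell and hence admits barycentric coordinates $\mu=\sum_j \lambda_j(\mu)\, s_j$ with grid points $s_j\in\Sigma$, weights $\lambda_j(\mu)\geq 0$ and $\sum_j \lambda_j(\mu)=1$. The ``linear point approximation'' of a function $w$ is then $\widehat{w}^\Sigma(\mu):=\sum_j \lambda_j(\mu)\, w(s_j)$, and $\BelOp^\Sigma$ is the Bellman operator in which the continuation value is evaluated through this interpolation, i.e.\ $v(\mu P(a,\xi))$ is replaced by $\widehat{v}^\Sigma(\mu P(a,\xi))$.

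The key step is the elementary inequality $v\leq \widehat{v}^\Sigma$ valid for every $v\in\vex(\Delta_N^K)$. Since the representation $\mu=\sum_j \lambda_j(\mu)\, s_j$ is a genuine convex combination (the barycentric weights are non-negative and sum to one), Jensen's inequality applied to the convex function $v$ yields
\[
v(\mu)=v\!\left(\sum_j \lambda_j(\mu)\, s_j\right)\leq \sum_j \lambda_j(\mu)\, v(s_j)=\widehat{v}^\Sigma(\mu)\enspace,
\]
so the interpolant dominates $v$ pointwise on $\Delta_N^K$.

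To conclude, I would apply this domination inside the Bellman update. For a fixed action $a$ and realization $\xi$ the reward term $r(a,\mu,\xi)$ is untouched, while $v(\mu P(a,\xi))\leq \widehat{v}^\Sigma(\mu P(a,\xi))$ by the previous step; integrating against $\sigma$ and taking the maximum over $a\in\calA$ preserves the inequality (both $\int(\cdot)\,d\sigma$ and $\max_a$ are monotone), giving $\BelOp v\leq \BelOp^\Sigma v$. Equivalently, since the paper has already noted that $\BelOp$ preserves convexity, one may place the interpolation after the update and use $\BelOp v\leq \widehat{\BelOp v}^\Sigma=\BelOp^\Sigma v$. The only genuinely non-trivial ingredient is the geometric guarantee that the Freudenthal triangulation expresses each $\mu$ as a convex combination of grid vertices with non-negative weights; granting that property of the triangulation, the argument is pure monotonicity and convexity. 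I would therefore expect the verification of this non-negativity of the barycentric coordinates to be the main point to cite, everything else being routine.
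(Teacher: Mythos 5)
Your proof is correct and is essentially the standard argument behind the result: the paper itself gives no proof, citing it directly from Hauskrecht (Thm.~12), where the key step is exactly yours --- barycentric coordinates in a Freudenthal cell form a convex combination of grid vertices, so Jensen's inequality gives $v\leq \widehat{v}^{\Sigma}$ pointwise for convex $v$, and monotonicity of integration and maximization (or, equivalently, convexity preservation by $\BelOp$, which the paper notes) lifts this to $\BelOp v\leq \BelOp^{\Sigma}v$. Nothing further is needed.
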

As the bias function $\hat{h}$ is convex at each iteration, the solution return by~Algorithm~\ref{algo::RVI} provides a gain which is an upper bound of the optimal gain $g^*$.

\begin{algorithm}[!ht] 
\small
\caption{RVI with Mann-type iterates} \label{algo::RVI}
\begin{algorithmic}[1]
\Require Grid $\Sigma$, Bellman operator $\BelOp^\Sigma$, initial function $\hh_0$
\State Initialize $\hh = \hh_0$, $\hh'(\mu) = \BelOp^\Sigma \hh$ 
\While{$\Span(\hh'-\hh)>\epsilon$}
\State $\hh \gets (\,\hh'- \max\{\hh'\}e+ \hh\,)/2$
\State $\hh'(\hat{\mu}) \gets (\BelOp^\Sigma \hh)(\hat{\mu})$ for all $\hat{\mu}\in \Sigma$
\EndWhile
\State $\hg \gets (\,\max(\hh'-\hh)+\min(\hh'-\hh)\,)/2$
\State\Return $\hg,\hh$
\end{algorithmic}
\end{algorithm}
In~Algorithm~\ref{algo::RVI}, we use, following~\cite{Gaubert_2020},
a mixture of the classical relative value iteration
algorithm~\cite{Puterman_1994} with a \emph{Krasnoselskii-Mann} damping.
As detailed in~\cite{Gaubert_2020} (Th.~9 and Coro~13), it follows
from a theorem of Ishikawa that the sequence of bias function $\hat{h}$ does converge, and it follows from a theorem of Baillon and Bruck that $\hat{g}$ provides an $\epsilon$ approximation of the optimal average cost $g^*$
after $O(1/\epsilon^2)$ iterations.

\subsection{Howard algorithm with on-the-fly transition generation}\label{sec::Howard}
We focus here on an other class of iterative methods to solve MDPs, namely \emph{policy iteration} (PI) algorithms, initiated by Howard (see e.g~\cite{Denardo_1968} of \cite{Puterman_1994}).
For deterministic Markov decision processes, a combinatorial implementation of Howard algorithm, with a linear-time per policy,
was given in~\cite{Cochet_1998}.
We refine the latter algorithm, with a method adapted to ``decomposable" state spaces.

Let $\Lambda = (\hat{\mu}_i)_{i\in[M]}$ be a \emph{local} semi-Lagrangian discretization of the simplex $\Delta_N$ of size $M:=|\Lambda|$. We refer the grid to be \emph{local}, since the discretization is done for the probability space of one sub-population and not on the \emph{global} probability space $\Delta^K_N$. The \emph{global} discretization is then 
$$\Sigma = (\hat{\mu}_{\vec{i}_1},\hdots,\hat{\mu}_{\vec{i}_K})_{\vec{i}\in[M]^K}\enspace.$$ 
We define the local transition operator $T^{\Lambda,k}: (i,a)\in [M] \times \calA \mapsto \argmin_{j\in[M]} \|\hat{\mu}_i P^k(a) - \hat{\mu}_j\|_{\infty}$. For each $k\in[K]$, this operator can be computed in a preprocessing step, and stored in $O(M\times |\calA|)$. Note that contrary to the RVI algorithm -- where a Freudenthal triangulation is performed during the computation of $\BelOp^\Sigma$ -- the transition operator is here approximated by finding the closest discretization point (in the $L_\infty$-norm) to the real next state.

The \emph{global} transition can then be obtained \emph{on-the-fly}, i.e., for any action $a\in\calA$ and global index $\vec{i}\in[M]^K$, $T^\Sigma(\vec{i},a)$ can be recomputed whenever it is required in the algorithm knowing the sub-transition $T^{\Lambda,k}(\vec{i}_k,a)$ for all $k\in\bbN$ :
\begin{equation}
    T^{\Sigma}: (\vec{i},a)\in[M]^K\times \calA \mapsto (T^{\Lambda,k}(\vec{i}_k,a))_{k\in[K]}\enspace.
\end{equation}
\begin{remark}
A complete storage of $T^\Sigma$ would lead to a memory occupation in $O(M^K \times|\calA|)$, whereas the storage of all $T^{\Lambda,k}$, $k\in[K]$, is in $O(K\times M\times |\calA|)$.
\label{remark::memory}
\end{remark}

\begin{algorithm}[!ht] 
\small
\caption{Howard Algorithm with on-the-fly transition generation} \label{algo::Howard}
\begin{algorithmic}[1]
\Require Local grid $\Lambda$, family of local transitions $(T^{\Lambda,k})_{k\in[K]}$, initial decision vector $\hat{d}'$
\Do
\State $\hat{d} \gets \hat{d}'$
\State $\hg,\hh$ solution of 
\Comment{Policy Evaluation}
$$
\left\{
\begin{aligned}
& \hg + \hh_{\vec{i}} = r(\hat{d}_{\vec{i}},\hat{\mu}_{\vec{i}}) + \hh_{\vec{j}},\; \vec{i}\in\Sigma\\
&\vec{j} = T^{\Sigma}(\vec{i},\hat{d}_{\vec{i}})
\end{aligned}
\right.
$$
\For{$\vec{i}\in\Sigma$}\Comment{Policy Improvement}
\State $\hat{d}'_{\vec{i}} \gets \argmin_{a\in\calA} \left\{
\begin{aligned}
&r(a,\hat{\mu}_{\vec{i}}) + \hh_{\vec{j}} \\
&\mathrm{s.t. } \vec{j} = T^{\Sigma}(\vec{i},a)
\end{aligned} \right\}$
\EndFor
\doWhile{$\hat{d}'\neq \hat{d}$}
\State\Return $\hg,\hat{d}$
\end{algorithmic}
\end{algorithm}

\Cref{algo::Howard} shows the Howard algorithm with on-the-fly transition generation. It consists in alternating a policy evaluation step with a policy improvement step. We implemented a parallelized version of this algorithm\footnote{Available at \url{https://gitlab.com/these_tarif/ergodic_inertia}} by adapting the code of~\cite{Cochet_1998}, 
initially intended for computing spectral elements in max-plus algebra. The algorithm is known to have experimentally a superlinear convergence which, in finite action-space setting, is reached in finitely many steps, see e.g.\cite{Puterman_1994}. Despite the decomposable transition, all the subpopulations $k\in[K]$ are linked together through a common policy. In the implementation, both the policy $\hat{d}$ and the bias function $\hh$ depend on the global state associated to index $\vec{i}\in [M]^K$. Therefore, the memory needed to run the algorithm is still exponential in the number of segments -- in $O(M^K)$ -- but would have been worst with stored global transition $T^\Sigma$ -- in $O(M^K \times |\calA|)$ -- the action space being very large in general, see~\Cref{remark::memory}. We provide in~\Cref{table::Howard} below benchmarks showing the gain (speedup and memory usage) brought by this approach. 

\section{Steady-state optimality}\label{sec::lagrangian}
\subsection{Definition}
It is of interest to investigate cases in which the dynamic problem reduces to a static one. In fact, in some cases the optimal stationary policy may be a simple policy that attracts the system to a steady-state (``get there, stay there" -- \cite{Flynn_1979}). For instance, Bauerle~\cite{Bauerle_2023} derives a class of mean-field MDPs solvable by a static program. 

\begin{defin}
Let $\calS=\{(a,\mu)\in \calA\times\Delta_N^K \,\vert\,\mu = \mu P(a)\}$ be the action-space domain of stationary probabilities.
Then, $\mu \in\Delta_N^K$ is a \emph{steady-state} if there exists $a\in \calA$ such that $(a,\mu)\in \calS$.
\end{defin}


If~\ref{hypo::primitivity_P} holds, then for any cluster $k$ and any price $a\in \calA$,
the Markov chain induced by the transition matrix $P^k(a)$ has a unique stationary distribution. We denote by $\overline{\mu}(\cdot): \calA \mapsto \Delta_N^K$ the mapping sending an action to the stationary distribution it induces.

\begin{defin}
The \emph{optimal steady-state gain} $\overline{g}$ is defined as 
\begin{equation}\label{eq::g_infty}
\overline{g} := \max_{(a,\mu)\in  \calS} r(a,\mu)\enspace.
\end{equation}
\end{defin}
If~\ref{hypo::primitivity_P} holds, \eqref{eq::g_infty} is in general a static nonconvex maximization problem over the actions. Nonetheless, we can expect to solve it efficiently in the case where $\overline{\mu}(\cdot)$ is analytically known, see e.g.~Section~\ref{sec::case_study}.
Maximizers $\overline{a}$ are called \emph{optimal steady-state price}, they correspond to a steady-state distribution $\overline{\mu}(\overline{a})$. 


\subsection{Optimality gap}
In this section we introduce a class of Lagrangian functions designed so that each dual problem turns out to be an upper bound of $g^*$. This extends the result of~\cite{Flynn_1979} involving usual Lagrangian functions. We use here a more general Lagrangian, depending on the choice of a non-linear function $\varphi$.
This leads to much tighter bounds, and allows us to prove the optimality of a steady-state strategy whenever a zero duality gap is obtained.
Let $\Phi$ be defined as 
$$\Phi = \{\varphi: \Delta_N^K \to \Delta_N^K \text{ injective and bounded}\}\enspace.$$
For a given function $\varphi \in \Phi$, we define the Lagrangian function $\mathcal{L}^{(\varphi)}: (\calA,\Delta_N^K,\bbR^{KN}) \to \bbR$ by 
$$\mathcal{L}^{(\varphi)}(a, \mu, \lambda):= r(a,\mu P(a)) + \left<\lambda,\varphi\left(\mu P(a)\right)-\varphi(\mu)\right>_{K N} \enspace.$$
As a direct consequence of the injectivity of $\varphi$, we obtain that for any given $\varphi\in\Phi$,$$\overline{g} = \max_{(a,\mu)\in\calA\times\Delta_N^K} \;\inf_{\lambda \in \bbR^{KN}} \mathcal{L}^{(\varphi)}(a,\mu,\lambda)\enspace.$$
We also define the dual problem $g^{(\varphi)}$ as 
\begin{equation}\label{eq::g_phi} 
g^{(\varphi)}:= \inf_{\lambda \in \bbR^{KN}} \; \max_{(a,\mu)\in\calA\times\Delta_N^K} \mathcal{L}^{(\varphi)}(a,\mu,\lambda)\enspace.
\end{equation}
The following result bounds the suboptimality gap
induced by the restriction to steady-state policies.
It will be applied in~\Cref{fig::lagrangian_bounds} to study the optimality
of such policies, in our application.
\begin{prop}\label{prop::dual_lagrangian}With $(g^*,h^*)$ solution of~\eqref{eq::ergodic_fixed_point} and $\overline{g}$ defined in~\eqref{eq::g_infty}, 
$$\overline{g} \leq g^*\leq g^{(\varphi)},\;\forall\varphi\in\Phi\enspace.$$
\end{prop}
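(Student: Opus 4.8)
The plan is to prove the two inequalities separately: the left one $\overline{g}\leq g^*$ by exhibiting an admissible policy, and the right one $g^*\leq g^{(\varphi)}$ by a telescoping weak-duality argument.

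For $\overline{g}\leq g^*$, I would take a maximizing pair $(\overline{a},\overline{\mu})\in\calS$ realizing $\overline{g}$, so that $\overline{\mu}=\overline{\mu}P(\overline{a})$ and $r(\overline{a},\overline{\mu})=\overline{g}$. Running the constant policy $\pi_t\equiv\overline{a}$ from the initial state $\mu_0=\overline{\mu}$ keeps the trajectory frozen at the fixed point $\overline{\mu}$, so the running average reward equals $\overline{g}$ at every horizon. Since \Cref{prop::verification_theorem} guarantees that $g^*$ is the optimal average reward and is independent of the initial distribution, this admissible policy gives $g^*=g^*(\overline{\mu})\geq\overline{g}$.

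For $g^*\leq g^{(\varphi)}$, fix $\varphi\in\Phi$ and $\lambda\in\bbR^{KN}$; it is enough to show $g^*\leq \max_{(a,\mu)\in\calA\times\Delta_N^K}\mathcal{L}^{(\varphi)}(a,\mu,\lambda)$, as taking the infimum over $\lambda$ then yields the claim. For any policy with induced trajectory $\mu_0,\mu_1,\dots$ and actions $a_1,a_2,\dots$ satisfying $\mu_t=\mu_{t-1}P(a_t)$, the $\varphi$-increments telescope:
\begin{equation*}
\sum_{t=1}^T \left<\lambda,\varphi(\mu_t)-\varphi(\mu_{t-1})\right>_{KN}=\left<\lambda,\varphi(\mu_T)-\varphi(\mu_0)\right>_{KN}.
\end{equation*}
Using $\mu_t=\mu_{t-1}P(a_t)$, each reward rewrites as $r(a_t,\mu_t)=\mathcal{L}^{(\varphi)}(a_t,\mu_{t-1},\lambda)-\left<\lambda,\varphi(\mu_t)-\varphi(\mu_{t-1})\right>_{KN}$, so that
\begin{equation*}
\frac{1}{T}\sum_{t=1}^T r(a_t,\mu_t)=\frac{1}{T}\sum_{t=1}^T \mathcal{L}^{(\varphi)}(a_t,\mu_{t-1},\lambda)-\frac{1}{T}\left<\lambda,\varphi(\mu_T)-\varphi(\mu_0)\right>_{KN}.
\end{equation*}
The first average is bounded above by $\max_{(a,\mu)}\mathcal{L}^{(\varphi)}(a,\mu,\lambda)$, and since $\varphi$ is bounded the boundary term is $O(1/T)$ and vanishes as $T\to\infty$. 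Taking $\liminf_T$ and then the supremum over all policies yields $g^*\leq\max_{(a,\mu)}\mathcal{L}^{(\varphi)}(a,\mu,\lambda)$, hence $g^*\leq g^{(\varphi)}$.

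The computation is essentially mechanical; the point to watch is the reward bookkeeping. The Lagrangian evaluates the reward at the post-transition state $r(a,\mu P(a))$, which matches $r(a_t,\mu_t)$ along the trajectory under the identification $(a,\mu)=(a_t,\mu_{t-1})$, so no index shift is hidden. The only property of $\varphi$ used in this direction is its boundedness, which is exactly what forces the $\frac{1}{T}$ boundary term to disappear irrespective of the trajectory's asymptotics; injectivity is not needed here (it only serves to express $\overline{g}$ as an exact min-max). Finally, I would note that the sandwich $\overline{g}\leq g^*\leq g^{(\varphi)}$ immediately gives the announced consequence: if the duality gap $g^{(\varphi)}-\overline{g}$ vanishes for some $\varphi\in\Phi$, then $g^*=\overline{g}$ and the optimal steady-state policy is globally optimal.
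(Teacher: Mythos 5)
Your proof is correct and follows essentially the same route as the paper: the constant steady-state action yields $\overline{g}\leq g^*$, and the telescoping of the $\varphi$-increments along an arbitrary trajectory, with boundedness of $\varphi$ killing the $\frac{1}{T}$ boundary term, gives the weak-duality bound $g^*\leq g^{(\varphi)}$. The only cosmetic differences are that the paper justifies the left inequality via the geometric convergence of the dynamics under the constant action $\overline{a}$ from an arbitrary initial state (whereas you start at $\mu_0=\overline{\mu}$ and invoke the state-independence of $g^*$ from \Cref{prop::verification_theorem}), and that the paper works with an $\epsilon$-optimal multiplier $\lambda^\epsilon$ while you fix $\lambda$ and take the infimum at the end.
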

\begin{proof}
The proof
extends the arguments in~\cite[Remark 5.1]{Flynn_1979} to nonlinear functions $\varphi\in\Phi$.

First, from the geometrical convergence of the dynamics (see~\Cref{eq::contraction}), the valid strategy consisting in executing action $\overline{a}$ each period of time induces an average reward of $\overline{g}$, regardless the initial distribution. Therefore, $\overline{g}\leq g^*$.

Then, for $\epsilon > 0$, there exists $\lambda^\epsilon$ such that for any $(a,\mu) \in \calA\times\Delta_N^k$,
$$r(a,\mu P(a)) + \left<\lambda^\epsilon, \varphi(\mu P(a)) -\varphi(\mu)\right>_{K N}
\leq g^{(\varphi)} + \epsilon\enspace.$$
We construct a sequence of decision $a_1,\hdots,a_T$ leading to distribution $\mu_1,\hdots,\mu_T$. Then, at each period $t$,
$$r(a_t,\mu_t)+ \left<\lambda^\epsilon,\varphi(\mu_t) - \varphi(\mu_{t-1})\right>_{KN}\leq g^{(\varphi)} + \epsilon\enspace.$$
Therefore, we take the mean over $t=1,\hdots, T$ to recover the average reward criteria:
$$\frac{1}{T}\sum_{t=1}^T r(a_t,\mu_t)+ \frac{1}{T}\left<\lambda^\epsilon,\varphi(\mu_T) - \varphi(\mu_0)\right>_{KN}\leq g^{(\varphi)} + \epsilon\enspace.$$
The second term converges to zero when $T\to\infty$ as we suppose that $\varphi$ is bounded on the simplex. So,
$$\liminf_{T\to\infty} \frac{1}{T}\sum_{t=1}^T r(a_t,\mu_t) \leq g^{(\varphi)} + \epsilon \enspace. $$
The latter inequality is valid for any $\epsilon>0$, and any sequence of action $(a_t)_{t\in\bbN}$, so $g^* \leq g^{(\varphi)}$.
\end{proof}

We define the duality gap $\delta_{\mathcal{L}^{(\varphi)}}$ as
$$
\delta_{\mathcal{L}^{(\varphi)}}:=  g^{(\varphi)} - \overline{g}.$$ 
As an immediate consequence of~Proposition~\ref{prop::dual_lagrangian},
if there exists $\varphi\in\Phi$ such that $\delta_{\mathcal{L}^{(\varphi)}}=0$, then $g^* = \overline{g}$, and the dynamic program~\ref{eq::optimality_criteria} reduces to the static optimization program~\eqref{eq::g_infty}. Depending on the problem parameters, the duality gap may, or may not, vanish, see~\Cref{fig::lagrangian_bounds}.


\section{Application to electricity pricing}\label{sec::case_study}

\subsection{Description}
We suppose that an electricity provider has $N\shortminus 1$ different types of offers and that a study has distinguished beforehand $K$ customer segments, assuming that customers of a given segment have approximately the same behavior.
Given a segment $k$ and an offer $n\in [N\shortminus 1]$,
the \emph{reservation price} $R^{kn}$ is the maximum price that customers of
this segment are willing to spend on $n$, and $E^{kn}$ is the (fixed) quantity a customer of segment $k$ will purchase if he chooses $n$. The \emph{utility} for these customers is linear and is defined as $$U^{kn}(a):= R^{kn} - E^{kn}a^n \enspace.$$
where $a^n$ is the price for one unit of product $n$. The action space is then a compact subset of $\bbR^{N\shortminus 1}$.


To model the competition between the provider and the other providers of the market, consumers have an alternative option (state of index $N$). We suppose that this alternative offer is fixed over time (for example a regulated contract). Then, under this assumption, it can be modelized w.l.o.g. by a null utility for each cluster ($U^{kN} = 0$).

If a customer of segment $k$ chooses the contract $n<N$ at price $a^n$, then the provider receives $E^{kn}a^n$ from the electricity consumption of the customer and has an induced cost of $C^{kn}$. Note that the cost should depend on the quantity $E^{kn}$, but as it is supposed to be a parameter, we omit this dependency. The (linear) reward for the provider is then
$$
\theta^{kn}(a) = E^{kn}a^n - C^{kn},\,n<N, \quad
\theta^{kN} = 0\enspace.
$$ 
We suppose that the transition probability follows a logit response, see e.g.~\cite{Pavlidis_2017}: 
\begin{equation}\label{eq::def_proba}
    [P^k(a)]_{n,m} = \frac{e^{\beta[U^{km}(a) + \gamma^{kn}\OneF_{m=n}]}}{\sum_{l\in[N]}e^{\beta[U^{kl}(a)+\gamma^{kn}\OneF_{l=n}]}}\enspace,
\end{equation}
where the parameter $\gamma^{kn}$ is the cost for segment $k$ to switch from contract $n$ to another one, and $\beta$ is the intensity of the choice (it can represent a ``rationality parameter''). 
One can easily check that~\ref{hypo::continuity_P}-\ref{hypo::bounded_r} are satisfied.

In the no-switching-cost case ($\gamma = 0$), we say that the customers response is \emph{instantaneous}, and corresponds to the classical logit distribution, see e.g.~\cite{Train_2009}:
\begin{equation}\label{eq::inst_response}
\mu^{kn}_L = e^{\beta U^{kn}(a)}\;/\;\sum_{l\in[N]}e^{\beta U^{kl}(a)}\enspace.
\end{equation}

\subsection{Steady-states}
The application scope of the transition model we defined in~\eqref{eq::def_proba} is broader than electricity pricing. For this specific kernel, we derive a closed-form expression for the stationary distributions, fully characterized by the instantaneous response:
\begin{theorem}\label{prop::proba_stat}
Given a constant action $a$, the distribution $\mu^k_t$ converges to $\overline{\mu}^k(a)$, defined as
\begin{equation}\label{eq::def_yinfty-old}
\overline{\mu}^{kn}(a) = \frac{\eta^{kn}(a) \mu^{kn}_L(a)}{\sum_{l\in [N]}\eta^{kl}(a) \mu^{kl}_L(a) }\enspace.
\end{equation}
where $\displaystyle \eta^{kn}(a) := 1+ \left[e^{\beta\gamma^{kn}}-1\right]\mu^{kn}_L(a)$, and $\mu_L$ is defined in~\eqref{eq::inst_response}.
\end{theorem}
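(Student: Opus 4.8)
The plan is to find the stationary distribution $\overline{\mu}^k(a)$ directly by solving the fixed-point equation $\overline{\mu}^k = \overline{\mu}^k P^k(a)$ and verifying that the proposed closed form satisfies it; convergence then follows from the contraction property already established. Since~\ref{hypo::primitivity_P} holds for this logit kernel (the matrix $P^k(a)$ has strictly positive entries by~\eqref{eq::def_proba}), Birkhoff's theorem gives contraction in Hilbert's projective metric, so the iterates $\mu^k_t = \mu^k_0 (P^k(a))^t$ converge to the unique stationary distribution regardless of $\mu^k_0$. The substance of the theorem is therefore the explicit formula, not the convergence, which I would dispatch in one sentence citing~\eqref{eq::contraction}.

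To identify the stationary vector, I would first rewrite the transition probabilities~\eqref{eq::def_proba} in a more tractable form. Factoring the switching-cost term, for $m \neq n$ the numerator is $e^{\beta U^{km}(a)}$, while for $m = n$ it is $e^{\beta[U^{kn}(a)+\gamma^{kn}]}$. Writing $Z^{kn}(a) := \sum_{l\in[N]} e^{\beta U^{kl}(a)} + (e^{\beta\gamma^{kn}}-1)e^{\beta U^{kn}(a)}$ for the common denominator of row $n$, and letting $S^k(a) := \sum_{l\in[N]} e^{\beta U^{kl}(a)}$ so that $\mu^{kn}_L(a) = e^{\beta U^{kn}(a)}/S^k(a)$, one sees that $Z^{kn}(a) = S^k(a)\,\eta^{kn}(a)$ with $\eta^{kn}(a) = 1 + (e^{\beta\gamma^{kn}}-1)\mu^{kn}_L(a)$ exactly as stated. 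The key algebraic observation I would exploit is that the detailed-balance-like structure makes the stationarity equations tractable: I would substitute the candidate $\overline{\mu}^{kn}(a) \propto \eta^{kn}(a)\mu^{kn}_L(a)$ into $\sum_m \overline{\mu}^{km}[P^k(a)]_{m,n} = \overline{\mu}^{kn}$ and check the identity column by column. The cross terms ($m\neq n$) all carry the factor $e^{\beta U^{kn}(a)}$, while the diagonal term carries $e^{\beta[U^{kn}(a)+\gamma^{kn}]}$; after multiplying through by the appropriate $Z$-factors these should telescope so that both sides reduce to $\eta^{kn}(a)\mu^{kn}_L(a)$ times a common constant, confirming stationarity.

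I would expect the main obstacle to be the bookkeeping in the stationarity check: because each row of $P^k(a)$ has a \emph{different} normalizing denominator $Z^{kn}(a)$ (the switching-cost bonus sits on the diagonal and shifts the normalization per row), the equation $\overline{\mu}^k = \overline{\mu}^k P^k(a)$ does not factor as cleanly as a reversible chain, and one must track how the $\eta^{kn}$ factors interact with these row-dependent denominators. A cleaner route that I would try in parallel is to guess and verify reversibility: look for weights $w^{kn}$ with $w^{kn}[P^k(a)]_{n,m} = w^{km}[P^k(a)]_{m,n}$ for all $m\neq n$. For $m\neq n$ this requires $w^{kn} e^{\beta U^{km}}/Z^{kn} = w^{km} e^{\beta U^{kn}}/Z^{km}$, i.e. $w^{kn} Z^{km}/e^{\beta U^{kn}}$ is symmetric in $(m,n)$ up to the structure; solving this forces $w^{kn} \propto \eta^{kn}(a)\mu^{kn}_L(a)$, which both pins down the formula and furnishes an immediate proof of stationarity (reversibility implies stationarity by summing over $m$). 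I would present whichever of the two verifications is shorter, normalize $\overline{\mu}^k$ to lie in $\Delta_N$ to obtain the denominator in~\eqref{eq::def_yinfty-old}, and close by invoking uniqueness and convergence from the contraction estimate.
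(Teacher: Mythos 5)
Your proposal is correct, and your primary route is essentially the paper's own proof in different clothing: the paper performs the change of variables $\tilde{\mu}^n = \mu^n / Z^{kn}(a)$ (dividing out precisely the row denominators you call $Z^{kn}$), reduces stationarity to $\tilde{\mu}^m\sum_{l\neq m}e^{\beta U^l} = e^{\beta U^m}\sum_{l\neq m}\tilde{\mu}^l$, and verifies the candidate $\tilde{\mu}^n = \lambda e^{\beta U^n}$, i.e.\ $\overline{\mu}^n \propto Z^n e^{\beta U^n} \propto \eta^n \mu^n_L$, exactly your substitution. The bookkeeping obstacle you anticipate does not materialize: plugging $\overline{\mu}^m \propto Z^m e^{\beta U^m}$ into $\sum_m \overline{\mu}^m [P^k(a)]_{m,n}$, each off-diagonal term contributes $Z^m e^{\beta U^m}\cdot e^{\beta U^n}/Z^m$, so the row-dependent denominators cancel \emph{exactly}, and the sum collapses in one line to $e^{\beta U^n}\bigl[S - e^{\beta U^n} + e^{\beta(U^n+\gamma^n)}\bigr] = e^{\beta U^n} Z^n$, which is the candidate again. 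Your parallel route is the genuinely new content relative to the paper: with $w^n = Z^n e^{\beta U^n}$ one checks $w^n [P^k(a)]_{n,m} = e^{\beta U^n} e^{\beta U^m} = w^m [P^k(a)]_{m,n}$ for all $m\neq n$, so the logit-with-switching-cost kernel~\eqref{eq::def_proba} is \emph{reversible} with respect to $\eta^n\mu^n_L$ -- a structural fact the paper never records; it \emph{derives} the stationary vector (detailed balance forces the weights, up to scale) rather than guessing and verifying it, and stationarity follows by summing over $m$. Finally, you are more careful than the paper on the convergence claim: positivity of $P^k(a)$ gives~\ref{hypo::positiveness_P}, hence Birkhoff contraction~\eqref{eq::contraction} and geometric convergence of $\mu^k_t$ to the unique stationary distribution from any initial state, whereas the paper's proof only solves the stationarity equation and leaves convergence implicit.
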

\begin{proof}
In the proof, we forget the dependence on $k$ and $a$.
The stationary probability is defined as
$\forall m \in[N],\, \mu^m\left[1-P^{mm}\right] = \sum_{n\neq m}\mu^n P^{nm}\enspace.$
We can then replace by the definition of the probabilities~\eqref{eq::def_proba} to obtain
$$\mu^m\left[\frac{\sum_{l\neq m}e^{\beta U^m}}{\sum_l e^{\beta[U^l + \OneF_{l=m}\gamma^m]}}\right] = \sum_{n\neq m}\mu^n\left[\frac{e^{\beta U^m}}{\sum_l e^{\beta[U^l + \OneF_{l=n}\gamma^n]}}\right]\enspace.$$
Defining $\tilde{\mu}^n:= \frac{\mu^n}{\sum_l e^{\beta[U^l + \OneF_{l=n}\gamma^n]}},$ we obtain 
$$\forall m\in[N],\, \tilde{\mu}^m\sum_{l\neq m}e^{\beta U^l} = e^{\beta U^m}\sum_{l\neq m}\tilde{\mu}^l\enspace.$$
The solution $\tilde{\mu}^n := \lambda e^{\beta U^n}, n\in[N]$ is then a valid solution, and the constant $\lambda$ is chosen so that $\sum_{l\in[N]}\mu^l = 1$:
\begin{equation}\label{eq::def_yinfty}
\begin{aligned}
\overline{\mu}^{kn}(a) &= \lambda e^{\beta U^{kn}(a)} \sum_{m\in [N]}e^{\beta[U^{kn}(a) + \OneF_{m=n}\gamma^{kn}]}\\
\lambda^{-1} &= \sum_{n\in [N]}e^{\beta U^{kn}(a)} \sum_{m\in [N]}e^{\beta[U^{km}(a) + \OneF_{m=n}\gamma^{kn}]}
\end{aligned}
\end{equation}
Finally, $\eta^{kn} = \sum_l e^{\beta[U^{kl} + \OneF_{l=n}\gamma^{kn}]}/\sum_l e^{\beta U^{kl}}$. We recover the definition of $\overline{\mu}$~\eqref{eq::def_yinfty}.
\end{proof}

As a consequence, the optimal steady-state can be found by solving
\begin{equation}\label{eq::optimal_steady_state}
\overline{g} = \max_{a\in\calA} r(a,\overline{\mu}(a))\enspace.
\end{equation}
Problem~\eqref{eq::optimal_steady_state} has no guarantee to be convex. However, it is a box-constrained smooth optimization problem which can be much more efficiently solved (at least up to local maximum) than the original time-dependent problem.  

In addition, if we suppose that $\gamma^{kn} = \gamma^k > 0$ for all $n$, then for any $a\in \calA$, we get the two following properties as immediate consequence of Theorem~\ref{prop::proba_stat}:
\begin{itemize}
\item $\displaystyle\lim_{\gamma^k\to 0} \overline{\mu}^{k}(a) = \mu^k_L(a)$,
\item $(\overline{\mu}^{kn})$ and $(\mu^{kn}_L)$ are sorted in the same order.
\end{itemize}
We now aim to compare the steady-state $\overline{\mu}^k(a)$ with the logit distribution $\mu_L^k(a)$ using the majorization theory:
\begin{defin}[Majorization,\cite{Marshall_2011}]For a vector $a\in\bbR^d$, we denote by $a^{\downarrow}\in \bbR^d$ the vector with the same components, but sorted in descending order. Given $a,b \in \Delta_d$, we say that $a$ majorizes $b$ from below written $a\succ b$ iff
$$\sum_{i=1}^k a^{\downarrow}_i \geq \sum_{i=1}^k b^{\downarrow}_i \quad \text{for }k=1,\hdots,d\enspace .$$
\end{defin}

\begin{prop}[Majorization property of the steady-state]\label{prop::majorization} Let $k\in[K]$ and $a\in \calA$ be given. Suppose that $\gamma^{kn} = \gamma^k > 0$ for all $n\in[N]$, then the stationary distribution majorizes the instantaneous logit response i.e.,
\begin{equation}\overline{\mu}^k(a) \succ \mu_L^k(a)\enspace .
\end{equation}
\end{prop}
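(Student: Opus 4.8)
The plan is to fix the cluster $k$ and action $a$ throughout, abbreviate the instantaneous logit weights by $p_n := \mu_L^{kn}(a)$ (so that $\sum_{n} p_n = 1$ and each $p_n\in[0,1]$), and reduce everything to a scalar inequality. Since $\gamma^{kn}=\gamma^k>0$ and $\beta>0$, the constant $c := e^{\beta\gamma^k}-1$ is strictly positive, and the closed form of \Cref{prop::proba_stat} collapses: using $\eta^{kn}=1+c\,p_n$ and $\sum_l p_l = 1$ in its denominator, one gets
$$\overline{\mu}^{kn}(a) = \frac{p_n(1+c\,p_n)}{1+c\,S}\,,\qquad S:=\sum_{l\in[N]} p_l^2\,.$$

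First I would invoke the order-preservation already recorded after \Cref{prop::proba_stat}: the map $x\mapsto x(1+cx)$ is increasing on $[0,1]$, so $\overline{\mu}^{kn}(a)$ and $p_n$ are sorted in the same order. Hence I may relabel the states so that $p_1\ge p_2\ge\cdots\ge p_N$, in which case both $p$ and $\overline{\mu}^k(a)$ already coincide with their own descending rearrangements. The majorization claim then reduces to the $N$ partial-sum inequalities $\sum_{i=1}^m \overline{\mu}^{ki}(a)\ge\sum_{i=1}^m p_i$ for $m=1,\dots,N$. Substituting the closed form and clearing the positive denominator $1+cS$, then cancelling the common term $\sum_{i\le m}p_i$ and dividing by $c>0$, each such inequality is equivalent to $B_m \ge S\,A_m = B_N\,A_m$, where I write $A_m:=\sum_{i\le m}p_i$ and $B_m:=\sum_{i\le m}p_i^2$.

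Splitting $B_N=B_m+B'$ and $1=A_m+A'$ with $B':=\sum_{i>m}p_i^2$ and $A':=\sum_{i>m}p_i$, this rearranges to $B_m A'\ge B' A_m$, i.e.\ $\tfrac{B_m}{A_m}\ge\tfrac{B'}{A'}$ whenever $A_m,A'>0$ (the degenerate cases $A'=0$ or $A_m=0$ giving equality or a triviality, since a sorted vector with $A_m=0$ must vanish identically). This last inequality is the heart of the argument and the only step I expect to be non-routine. Its content is that $\tfrac{B_m}{A_m}=\tfrac{\sum_{i\le m}p_i\cdot p_i}{\sum_{i\le m}p_i}$ is the average of $\{p_1,\dots,p_m\}$ weighted by the $p_i$ themselves, hence lies in $[p_m,p_1]$ and in particular is $\ge p_m$; symmetrically $\tfrac{B'}{A'}$ is a weighted average of $\{p_{m+1},\dots,p_N\}$ and is $\le p_{m+1}$. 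Since the states are sorted, $p_m\ge p_{m+1}$, giving $\tfrac{B_m}{A_m}\ge p_m\ge p_{m+1}\ge\tfrac{B'}{A'}$ and closing the proof.

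Conceptually, this is the familiar principle that re-weighting a distribution by an increasing function of its own coordinate value (here the factor $1+c\,p_n$ with $c>0$) tilts mass toward the larger coordinates and therefore yields a majorizing vector; the comparison of the weighted conditional averages over the ``top'' and ``bottom'' blocks is the precise quantitative form of that heuristic. I would expect the bookkeeping in the reduction (order preservation, cancellation, handling $A'=0$) to be entirely mechanical, so the whole difficulty is concentrated in the two-block weighted-average estimate.
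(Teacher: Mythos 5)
Your proposal is correct and is essentially the paper's own argument: the paper likewise sorts the coordinates so that $\mu_L^k$ (hence $\eta$ and $\overline{\mu}^k$) is decreasing, reduces majorization to the partial-sum inequalities, and settles them by the one-line estimate $\sum_{l>m}\eta^l\mu_L^l\big/\sum_{i\le m}\eta^i\mu_L^i \le \sum_{l>m}\mu_L^l\big/\sum_{i\le m}\mu_L^i$, which follows from the decreasing ordering of the weights $\eta^n = 1+c\,\mu_L^{kn}$. Your two-block inequality $B_m A' \ge B' A_m$ is exactly this cross-multiplied estimate after substituting $\eta^n=1+c\,p_n$ and cancelling the constant part, so the two proofs differ only in bookkeeping.
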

\begin{proof}
    Let us suppose that we reorder the probabilities (and the $\eta$) such that they are sorted in the decreasing order.
$$\begin{aligned}
\left(\sum_{m=1}^n \overline{\mu}^m\right)^{\shortminus 1} &= \frac{\sum_{l=1}^n\eta^l \mu^l_L + \sum_{l=n+1}^N\eta^l\mu^l_L}{\sum_{m=1}^n\eta^m \mu^m_L}\\
&=1+\frac{\sum_{l=n+1}^N\eta^l \mu^l_L}{\sum_{m=1}^n\eta^m \mu^m_L}\\
&\leq 1+\frac{\sum_{l=n+1}^N \mu^l_L}{\sum_{m=1}^n \mu^m_L}
=\left(\sum_{m=1}^n \mu^m_L\right)^{\shortminus 1} \enspace .
\end{aligned}$$
The inequality comes from the sorting of $\eta$, and the last equality from $\sum \mu_L = 1$.
\end{proof}
Proposition~\ref{prop::majorization} establishes a qualitative feature of this model: if the price is kept constant over time, then, in the model with inertia, the stationary distribution of the population
{\em majorizes} the one obtained in the corresponding logit-model without inertia. Recalling that the majorization order expresses a form of dispersion, this means that inertia increases the concentration of the population on its favorite
offers.

\begin{lemma}\label{prop::maj_bound} Let us consider $a$ and $b$ in $\Delta_d$. If $a\succ b$, then for all $i$, $a_i \leq d b_i$.
\end{lemma}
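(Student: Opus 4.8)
The plan is to reduce to the case where the two vectors are listed in decreasing order, i.e.\ $a=a^{\downarrow}$ and $b=b^{\downarrow}$. Some such hypothesis is genuinely needed: majorization depends only on the sorted vectors $a^{\downarrow},b^{\downarrow}$, whereas the asserted bound $a_i\le d\,b_i$ is a \emph{componentwise} statement, so without a consistent ordering one could place the large mass of $a$ and of $b$ on different coordinates. This consistent ordering is exactly the co-monotone situation produced by \Cref{prop::majorization}, where $\overline{\mu}^k$ and $\mu_L^k$ are sorted in the same order, so restricting to sorted $a,b$ is harmless for the intended use.

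Once both vectors are sorted decreasingly, I would establish the claim by a single chain of three elementary inequalities that bound $a_i$ from above through its tail sum $\sum_{j\ge i} a_j$. First, by nonnegativity of the entries, $a_i\le \sum_{j=i}^{d} a_j$. Second, I would convert the head-sum majorization inequality into a tail inequality: since $\sum_{j=1}^{i-1}a_j\ge \sum_{j=1}^{i-1}b_j$ by definition of $a\succ b$, and since both vectors sum to $1$ (they lie in $\Delta_d$), taking complements reverses the inequality, giving $\sum_{j=i}^{d}a_j = 1-\sum_{j=1}^{i-1}a_j \le 1-\sum_{j=1}^{i-1}b_j = \sum_{j=i}^{d}b_j$. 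Third, I would use that $b$ is sorted decreasingly, so $b_j\le b_i$ for every $j\ge i$, whence $\sum_{j=i}^{d}b_j \le (d-i+1)\,b_i \le d\,b_i$. Chaining the three yields $a_i\le \sum_{j=i}^d a_j \le \sum_{j=i}^d b_j \le (d-i+1)\,b_i \le d\,b_i$, which is even slightly sharper than required.

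The only genuinely delicate point is the reduction in the first paragraph: recognizing that the inequality is meaningful only after the two vectors are ordered consistently, and tying this to the co-monotonicity already established in \Cref{prop::majorization}. The remaining two analytic steps — the complementation that turns a head-sum majorization into a tail-sum comparison, and the monotonicity bound $\sum_{j\ge i} b_j \le (d-i+1)\,b_i$ — are routine, so I expect no real obstacle beyond keeping the index bookkeeping straight.
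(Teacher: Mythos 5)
Your proof is correct and is essentially identical to the paper's own argument: the same chain $a^{\downarrow}_i \leq \sum_{j=i}^d a^{\downarrow}_j = 1-\sum_{j=1}^{i-1}a^{\downarrow}_j \leq 1-\sum_{j=1}^{i-1}b^{\downarrow}_j = \sum_{j=i}^d b^{\downarrow}_j \leq (d-i+1)\,b^{\downarrow}_i \leq d\,b^{\downarrow}_i$, combining nonnegativity, complementation of the head-sum majorization, and monotonicity of the sorted vector. Your opening remark that the componentwise bound only makes sense for consistently sorted (co-monotone) vectors is a point the paper leaves implicit --- its one-line proof silently works with $a^{\downarrow}$ and $b^{\downarrow}$, and the application in \Cref{prop::boundedness_gamma} is legitimate only because $\overline{\mu}^{k}$ and $\mu_L^{k}$ are sorted in the same order --- so flagging it is a welcome clarification rather than a deviation.
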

\begin{proof}
$\displaystyle
a^{\downarrow}_i \leq \sum_{j=i}^d a^{\downarrow}_j = 1 - \sum_{j=1}^{i-1} a^{\downarrow}_j
\leq 1 - \sum_{j=1}^{i-1} b^{\downarrow}_j = \sum_{j=i}^d b^{\downarrow}_j 
\leq (d-i+1)b^{\downarrow}_i\leq d b^{\downarrow}_i \enspace .
$
\end{proof}
\begin{prop}[Boundedness of the steady-state gain]\label{prop::boundedness_gamma}
Even with $\mathcal{A} = \bbR^{N-1}$, the optimal steady-state gain $\overline{g}$ is bounded independently of $\gamma$.
\end{prop}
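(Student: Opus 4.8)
The plan is to show that the objective of the steady-state program~\eqref{eq::optimal_steady_state} admits an upper bound not involving $\gamma$; a finite lower bound is immediate, since $\overline{g}$ dominates the value $r(a,\overline{\mu}(a))$ of any single fixed feasible action, so the real content is boundedness from above. First I would expand the reward using the stationary distribution of~\Cref{prop::proba_stat}, writing $r(a,\overline{\mu}(a)) = \sum_{k\in[K]}\rho_k \sum_{n<N}(E^{kn}a^n - C^{kn})\overline{\mu}^{kn}(a)$. The cost contribution $-\sum_{n<N}C^{kn}\overline{\mu}^{kn}(a)$ is bounded in modulus by $\sum_{n<N}|C^{kn}|$ because $\overline{\mu}^{kn}(a)\in[0,1]$, so it suffices to control the revenue terms $E^{kn}a^n\overline{\mu}^{kn}(a)$ uniformly over $a\in\bbR^{N-1}$ and over the switching costs.

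The key step is to transfer the $\gamma$-independence through the majorization estimate already established. Under the symmetry assumption $\gamma^{kn}=\gamma^k$ inherited from the majorization setting, \Cref{prop::majorization} gives $\overline{\mu}^k(a)\succ\mu_L^k(a)$, and \Cref{prop::maj_bound} then yields the pointwise comparison $\overline{\mu}^{kn}(a)\le N\,\mu_L^{kn}(a)$ for every $n$. The decisive point is that the constant $N$ is purely combinatorial and does \emph{not} depend on $\gamma$, which is exactly what makes the inertia disappear from the final bound. It then remains to bound $E^{kn}a^n\mu_L^{kn}(a)$. For $a^n\le 0$ this quantity is nonpositive (recall $E^{kn}>0$), hence harmless. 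For $a^n>0$, using that the logit denominator in~\eqref{eq::inst_response} is at least the outside-option weight $e^{\beta U^{kN}}=1$, we get $\mu_L^{kn}(a)\le e^{\beta R^{kn}}e^{-\beta E^{kn}a^n}$, so that with $t:=E^{kn}a^n>0$,
\[
E^{kn}a^n\,\mu_L^{kn}(a) \;\le\; e^{\beta R^{kn}}\,t\,e^{-\beta t} \;\le\; \frac{e^{\beta R^{kn}}}{\beta e},
\]
the last inequality being the elementary maximum $\sup_{t\ge 0} t e^{-\beta t}=1/(\beta e)$.

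Combining these estimates gives $r(a,\overline{\mu}(a)) \le \sum_{k\in[K]}\rho_k\sum_{n<N}\bigl(\tfrac{N e^{\beta R^{kn}}}{\beta e}+|C^{kn}|\bigr)$ for every admissible $a$, a constant independent of $\gamma$, whence $\overline{g}$ is bounded. The main obstacle, and the crux of the argument, is precisely this $\gamma$-uniformity: the linear growth of the price $a^n$ must be dominated by the exponential decay of the logit weight, and this only succeeds once the inertia factor $\eta^{kn}$ has been absorbed into a $\gamma$-free multiple of $\mu_L^{kn}$. The majorization bound is what supplies this absorption, and the symmetry hypothesis $\gamma^{kn}=\gamma^k$ cannot be dropped: with option-dependent switching costs one can trap a fixed fraction of the population in an arbitrarily overpriced contract, making the steady-state gain grow linearly in $\gamma$ and defeating any $\gamma$-uniform bound.
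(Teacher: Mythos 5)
Your proof is correct and follows essentially the same route as the paper's: both rest on the $\gamma$-free pointwise domination $\overline{\mu}^{kn}(a)\leq N\,\mu_L^{kn}(a)$ supplied by \Cref{prop::majorization} and \Cref{prop::maj_bound}, followed by the elementary estimate $\sup_{t\geq 0} t e^{-\beta t}=1/(\beta e)$ letting the exponential logit decay absorb the linear price growth (the paper recentres by $R^{kn}$ to work with $-U^{kn}(a)$ and the no-purchase weight, yielding the slightly tighter constant $\max_{k,n}(R^{kn}-C^{kn})+N/(\beta e)$, whereas your direct bound $\mu_L^{kn}(a)\leq e^{\beta U^{kn}(a)}$ gives an $e^{\beta R^{kn}}$-dependent constant --- a cosmetic difference). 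Your closing observation that the symmetry $\gamma^{kn}=\gamma^k$ is implicitly required --- the paper's proof also invokes \Cref{prop::maj_bound}, hence \Cref{prop::majorization} --- is correct and worth noting, since the statement of the proposition leaves this assumption tacit.
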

\begin{proof} Suppose that the optimal steady-state gain is attained for an action $a$, then
$$
\begin{aligned}
\overline{g}&=\sum_{k\in[K]}\rho_k\sum_{n\in [N]} (E^{kn} a^n - C^{kn})\overline{\mu}^{kn}(a)\\ 
&\leq \max_{k,n}(R^{kn}-C^{kn}) +\sum_{k\in[K]}\rho_k\sum_{n\in [N]} (E^{kn} a^n - R^{kn})\overline{\mu}^{kn}(a)  \\
&\leq \max_{k,n}(R^{kn}-C^{kn}) +\sum_{\substack{k\in[K]\\U^{kn}(a) < 0}}\rho_k \left<-U^{k}(a),\overline{\mu}^{k}(a)\right>_N \\
&\leq \max_{k,n}(R^{kn}-C^{kn}) + N \sum_{\substack{k\in[K]\\U^{kn}(a) < 0}}\rho_k\left<-U^{k}(a),\mu_L^k(a)\right>_N\\
&\leq \max_{k,n}(R^{kn}-C^{kn}) + \frac{N}{\beta e} \enspace.
\end{aligned}
$$
The third inequality comes from~\Cref{prop::maj_bound}. For the fourth one, since the logit expression contains a no-purchase option, $\mu_L^{kn}\leq \frac{1}{1+e^{-\beta U^{kn}(a)}}$. To conclude, it remains to see that $1+e^{\beta z} \geq (\beta e) z$ for all $z$.
\end{proof}
\Cref{prop::boundedness_gamma} proves that the optimal steady-state gain cannot diverge to infinity when the inertia grows. This qualitative result is no longer true for the optimal strategy (which may be a periodic sequence of actions instead of a single constant one), see~\Cref{sec::cycling_strategies}.

\section{Numerical results}\label{sec::results}
The numerical results were obtained on a laptop i7-1065G7 CPU@1.30GHz. 
We solved the problem up to dimension 4 (2 provider offers, 2 clusters) with high precision ($\delta_\mu = 50$ points for each dimension, $1.6$ million discretization points, precision $\epsilon = 10^{\shortminus 5}$) in $7$ hours for RVI algorithm and in $70$ seconds for the Howard algorithm adapted to decomposable state-spaces (both methods parallelized on $8$ threads), see~\Cref{table::Howard}. The Policy Iteration algorithm adapted to decomposable state-spaces (\Cref{algo::Howard}) induces drastic computational time reductions with respect to the value-iteration algorithm and considerable memory gains in comparison with standard policy iteration procedure.

\begin{table}[!ht]
\centering
\begin{tabular}{|c|c||c|c|c|}
\hline
Instance & (node, arcs) & RVI & PI~\cite{Cochet_1998} & This work \\
    \hline\hline
    $K=1,N=1$ & \multirow{ 2}{*}{($2$e$3$,\,$2.5$e$6$)} & 70s & 1s & 0.2s\\
    $\delta_\mu = 1/2000$  & & 0.8Mo & 30Mo & 9Mo\\
    \hline
    $K=2,N=2$ & \multirow{ 2}{*}{($7.4$e$5$,\,$6.9$e$8$)} & 7h & 390s & 70s\\
    $\delta_\mu = 1/50$ & & 15Mo & 13Go & 103Mo\\
    \hline
\end{tabular}
\caption{Comparison RVI / Howard}\label{table::Howard}
\medskip\small
We provide running times that include the graph building step (which is a very costly operation for high dimensional graph in the standard PI algorithm. Each method ran on 8 threads.
\end{table}

In order to visualize qualitative results, we focus on the minimal non-trivial example (1 offer and 1 cluster). Note that the conclusions we draw from this example remain valid for the case $2$ offers / $2$ clusters. We use data of realistic orders of magnitude: we consider a population that checks monthly the market offers and consumes $E=500$kWh each month. The provider competes with a regulated offer of $0.17$\texteuro/kWh (inducing a reservation price of $85$\texteuro), and has a cost of $0.13$\texteuro/kWh. We suppose that the prices are freely chosen by the provider in the range $0.08$-$0.22$\texteuro/kWh. The intensity parameter $\beta$ is fixed to $0.1$.

\begin{figure*}[!ht]
\centering
\begin{subfigure}{.47\textwidth}
  \centering
  \includegraphics[width=\linewidth,clip=true,trim=.3cm 0cm 1.3cm 1.25cm]{./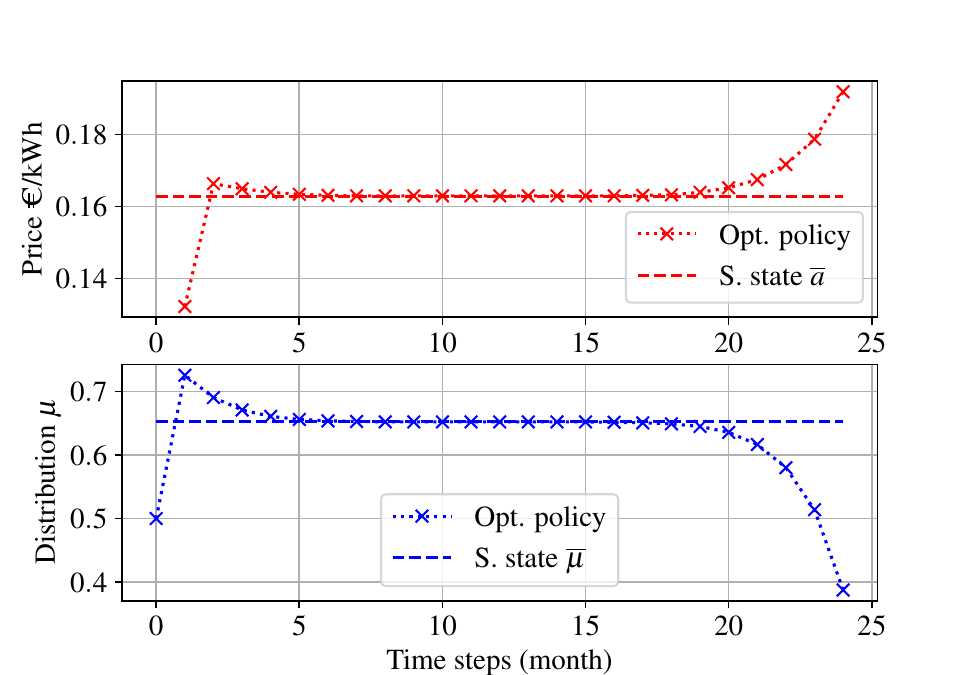}  
  \caption{Optimal finite horizon trajectory (provider action and customer distribution) for \emph{low} switching cost.}
  \label{fig::LFH_0-7}
\end{subfigure}
\hspace{0.015\linewidth}
\begin{subfigure}{.47\textwidth}
  \centering
  \includegraphics[width=\linewidth,clip=true, trim=.3cm 0cm 1.3cm 1.25cm]{./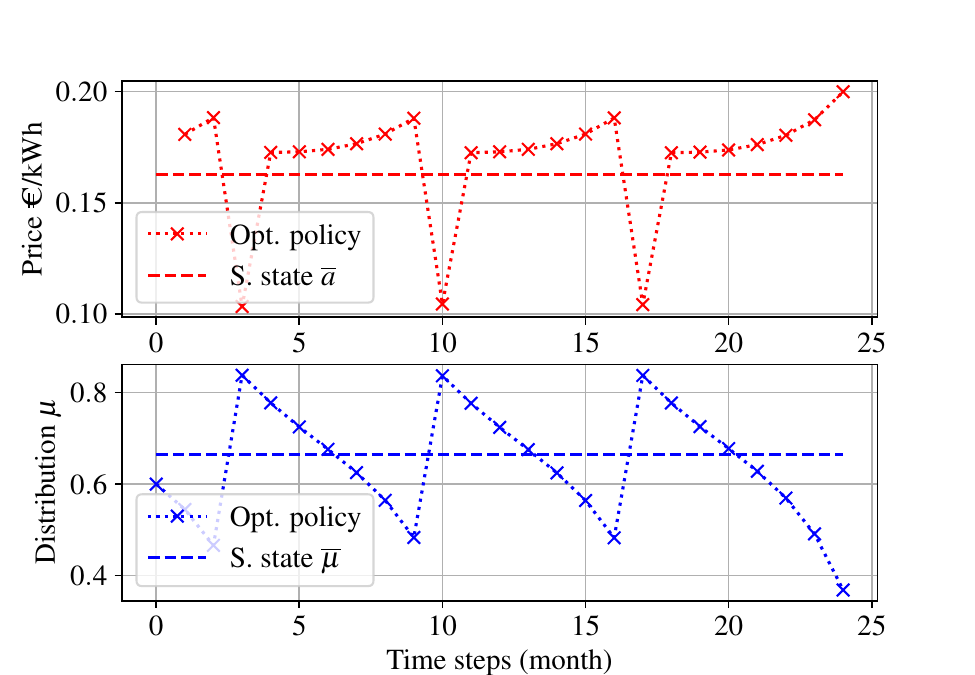}  
  \caption{Optimal finite horizon trajectory (provider action and customer distribution) for \emph{high} switching cost.}
  \label{fig::LFH_1-2}
\end{subfigure}
\\\vspace{.3\baselineskip}
\centering
\begin{subfigure}{.47\textwidth}
  \centering
  \includegraphics[width=\linewidth,clip=true,trim=.3cm 0cm 1.3cm 1.25cm]{./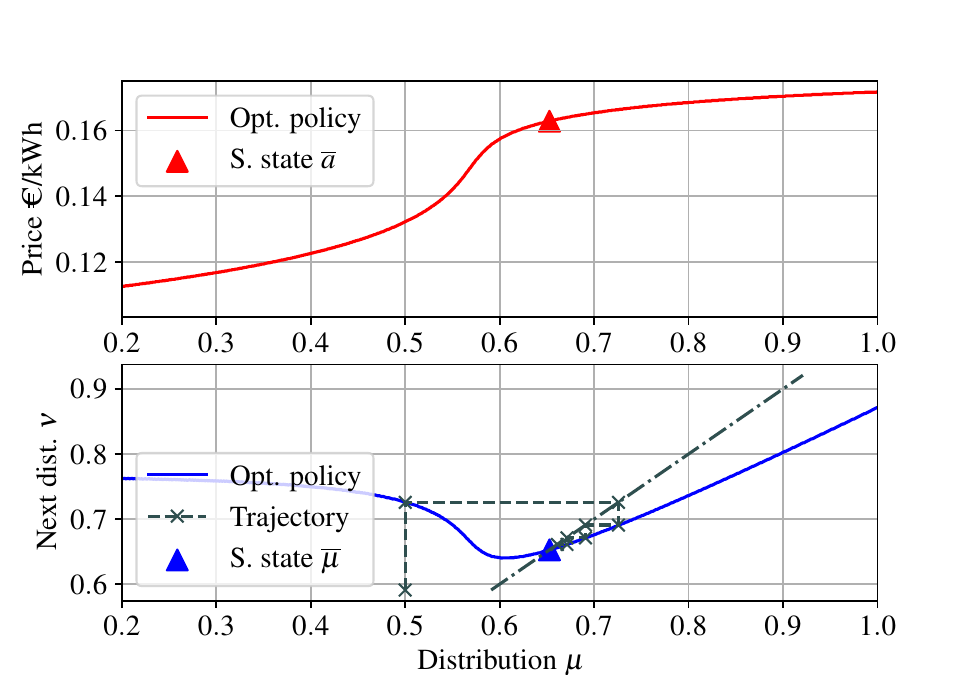}  
  \caption{Optimal decision for the long-run average reward (provider action and next customer distribution) for \emph{low} switching cost. Graphical iteration is drawn in dotted lines.}
  \label{fig::Erg_0-7}
\end{subfigure}
\hspace{0.015\linewidth}
\begin{subfigure}{.47\textwidth}
  \centering
  \includegraphics[width=\linewidth,clip=true,trim=.3cm 0cm 1.3cm 1.25cm]{./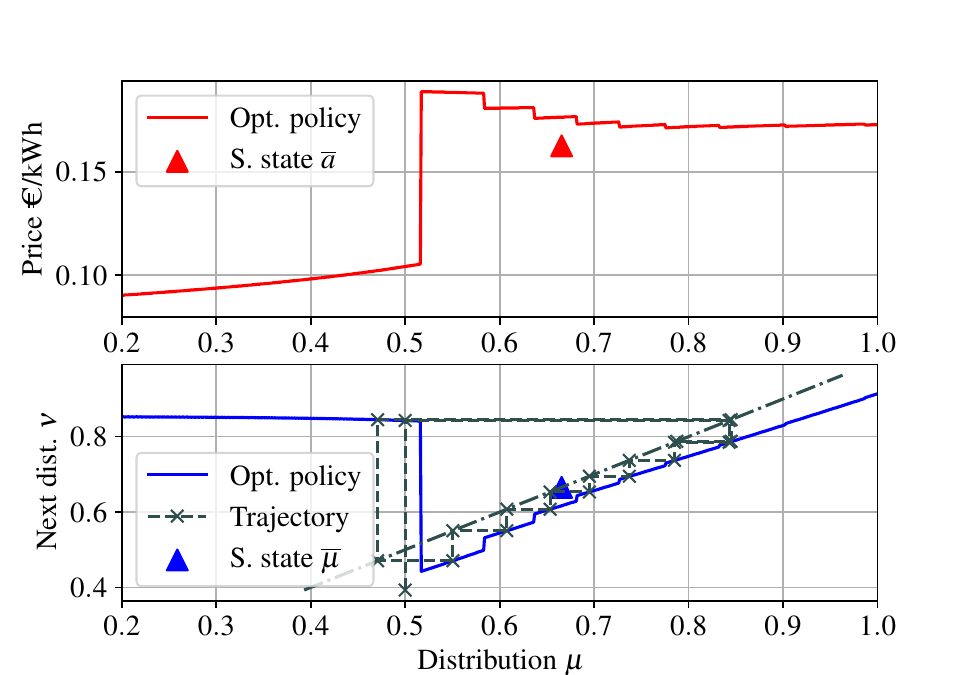}  
  \caption{Optimal decision for the long-run average reward (provider action and next customer distribution) for \emph{high} switching cost. Graphical iteration is drawn in dotted lines.}
  \label{fig::Erg_1-2}
\end{subfigure}
\caption{Numerical results for both the finite horizon and long-term average reward criteria.\\ \emph{Low} (resp. \emph{high}) switching cost stands for $\gamma = 20$ (resp. $25$). }
\label{fig::LFH_Erg_1D}
\end{figure*}

Numerical experiments in Fig.~\ref{fig::lagrangian_bounds}-\ref{fig::LFH_Erg_1D} emphasize the role of the switching cost. There exists a threshold -- around $\gamma=22$ in Fig.~\ref{fig::lagrangian_bounds} -- above which the steady-state policy become dominated by a cyclic strategy, where a period of promotion is periodically applied to recover a sufficient market share (period of $7$ time steps on this example, see Fig.~\ref{fig::LFH_1-2} and Fig.~\ref{fig::Erg_1-2}). Below this threshold, the optimal policy has an attractor point which is exactly the best steady-state price, see Fig.~\ref{fig::Erg_0-7}. The finite horizon policy is therefore a ``turnpike'' like strategy~\cite{Damm_2014}: we rapidly converge to the steady-state and diverge at the end of the horizon, see Fig.~\ref{fig::LFH_0-7}.  Fig.~\ref{fig::lagrangian_bounds} highlights that the adding of a convex function $\varphi$ strengthens the upper bound, so that the optimality of the steady-state strategy is guaranteed up to $\gamma$ around $19$.

\begin{figure}[!ht]
\centering
\includegraphics[width=0.9\linewidth, clip=true, trim=0.5cm 0.cm 1.5cm 1.2cm]{./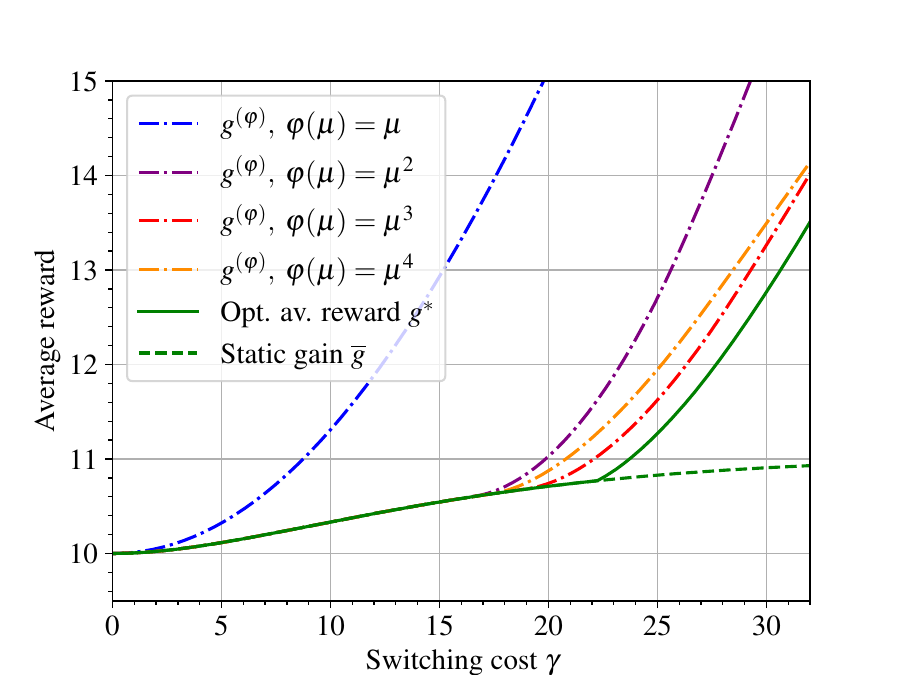}
\caption{Optimal gain $g^*$ for a range of switching costs, along with lower bound $\overline{g}$ and upper bounds $g^{(\varphi)}$, $\varphi(\cdot) = (\cdot)^{1,2,3,4}$.}
\label{fig::lagrangian_bounds}
\end{figure}

\section{Study of the minimal non-trivial model}\label{sec::nontrivial_example}
Let us study the simple (yet non-trivial) case where the company has $1$ contract ($N=1$) and the population is homogeneous ($K=1$). Numerical results have been shown in previous section. 

In this setting, the probability $\mu$ to choose the retailer contract lies in the segment $[0,1]$. For the finite-horizon setting, the toy model is therefore defined as
\begin{equation}
\max_{a_1,\hdots,a_{T}\in \calA^T} \left\{\begin{split}
&\sum_{t=1}^{T} (a_t-C)\mu_{t}\\
&\st \, \begin{bmatrix}\mu_{t}& 1\shortminus\mu_{t}\end{bmatrix} = \begin{bmatrix}\mu_{t-1} & 1\shortminus\mu_{t-1}\end{bmatrix} P(a_t)
\end{split}
\right\}
\label{eq::toy_model}
\end{equation}
with $$ P(a_t) = 
\begin{bmatrix}
\frac{e^{\beta \gamma} e^{-\beta(a_t-R)}} {1+e^{\beta \gamma} e^{-\beta(a_t-R)}}
& \frac{1} {1+e^{\beta \gamma} e^{-\beta(a_t-R)}}\\
\frac{e^{-\beta(a_t-R)}} {e^{\beta \gamma} + e^{-\beta(a_t-R)}}
& \frac{e^{\beta \gamma}} {e^{\beta \gamma} +e^{-\beta(a_t-R)}}
\end{bmatrix}
$$
In the sequel, the data is $C = 2$, $R = 3$, $\beta = 3$, $T = 45$.

\subsection{Cycling strategies}\label{sec::cycling_strategies}
\Cref{fig::LFH_Erg_1D} suggests a threshold (in terms of switching cost intensity) that separates the decision behavior into two different regimes : the convergence to a steady state for low switching costs intensity and the convergence to periodic strategies above the threshold (see~\Cref{fig::LFH_1-2}). Therefore, in order to better understand this cycling behavior, we define the set of periodic strategies in the one-dimensional case as follows:
\begin{defin} A \emph{$\tau$-cycle} is a cycling strategy of $\tau$ time steps, defined by
the customer response $(\mu_0,\hdots,\mu_{\tau-1},\mu_\tau)$, with the cycling condition $\mu_0=\mu_\tau$.
For any $\tau$-cycle $l$, we denote by 
\begin{enumerate}[label=(\roman*)]
  \item $\overline{\mu}[l]=\frac{1}{\tau}\sum_{t=1}^\tau \mu_t$ 
and $V[l]=\frac{1}{\tau}\sum_{t=1}^\tau (\mu_t-\overline{\mu})^2$ the mean and the variance of the customer distribution over the cycle,
  \item$g[l]=\frac{1}{\tau}\sum_{t=1}^\tau (a_t-C)\mu_{t}$ the gain (mean profit over the cycle).
\end{enumerate}
\label{defin::tau_cycle}
\end{defin}

\begin{prop}
Let $\gamma > 0$, knowing $\mu_{t-1}$ and $\mu_t$ in $[0,1]$, there exists a unique $a_t$ verifying the
constraint in~\eqref{eq::toy_model}, defined as 
\begin{equation}\label{eq::hat_x}
\hat{a}_t :=e^{-\beta (a_t-R)} = \frac{ 2\mu_t-\kappa_t + \sqrt{ (2\mu_t-\kappa_t)^2 + 4\hat{\gamma}^2\mu_t(1-\mu_t) } }
                  { 2\hat{\gamma}(1-\mu_t) }
\end{equation}
where $\hat{\gamma} = e^{\beta \gamma}$ and $\kappa_t = 1 + (\hat{\gamma}^2 -1) (\mu_{t-1} - \mu_t)$.
\label{prop::hat_x}\end{prop}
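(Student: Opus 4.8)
The plan is to treat the transition constraint in \eqref{eq::toy_model} as a single scalar equation in the auxiliary unknown $x := \hat{a}_t = e^{-\beta(a_t-R)}$, and to show that it reduces to a quadratic whose unique positive root is the stated expression. First I would record that $a_t \mapsto e^{-\beta(a_t-R)}$ is a strictly decreasing bijection from $\bbR$ onto $(0,+\infty)$, so that uniqueness of an admissible $x>0$ is equivalent to uniqueness of $a_t$. I would also observe that the vector constraint $[\mu_t,\,1-\mu_t] = [\mu_{t-1},\,1-\mu_{t-1}]P(a_t)$ amounts to its first coordinate alone: since $P(a_t)$ is row-stochastic and both sides are probability vectors, the second coordinate is then automatically satisfied.

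Writing $\hat\gamma = e^{\beta\gamma}$ and substituting the entries of $P(a_t)$ into the first coordinate gives
\begin{equation*}
\mu_t = \mu_{t-1}\,\frac{\hat\gamma\, x}{1+\hat\gamma\, x} + (1-\mu_{t-1})\,\frac{x}{\hat\gamma + x}\enspace.
\end{equation*}
The next step is routine but central: I would clear the common denominator $(1+\hat\gamma x)(\hat\gamma+x)$ and collect powers of $x$, obtaining
\begin{equation*}
\hat\gamma(1-\mu_t)\,x^2 - (2\mu_t-\kappa_t)\,x - \hat\gamma\,\mu_t = 0\enspace.
\end{equation*}
The only delicate bookkeeping is to verify that the coefficient of $x$ is exactly $2\mu_t-\kappa_t$: expanding, one finds it equals $(\mu_t+\mu_{t-1}-1) + \hat\gamma^2(\mu_t-\mu_{t-1})$, and a direct computation shows this coincides with $2\mu_t - \kappa_t$ for $\kappa_t = 1 + (\hat\gamma^2-1)(\mu_{t-1}-\mu_t)$.

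It then remains to solve the quadratic and single out the admissible root. For $\mu_t\in(0,1)$ the leading coefficient $\hat\gamma(1-\mu_t)$ is positive and the product of the two roots equals $-\mu_t/(1-\mu_t)<0$; hence exactly one root is positive, which forces uniqueness of $x>0$ and therefore of $a_t$. Applying the quadratic formula and retaining the $+$ sign (the positive root, since the discriminant dominates $|2\mu_t-\kappa_t|$ and the denominator is positive) yields precisely \eqref{eq::hat_x}. The main obstacle is thus not conceptual but the careful algebra of the coefficient identification; beyond that, I would remark that the degenerate cases $\mu_t\in\{0,1\}$ do not arise in the dynamics, since $P(a_t)\gg 0$ forces every reachable $\mu_t$ into $(0,1)$, so the leading coefficient never vanishes.
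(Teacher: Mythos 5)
Your proof is correct and follows essentially the same route as the paper's: reduce the vector constraint to its first scalar coordinate, clear denominators to obtain the quadratic $\hat{\gamma}(1-\mu_t)\,\hat{a}_t^2 - (2\mu_t-\kappa_t)\,\hat{a}_t - \hat{\gamma}\mu_t = 0$, and take the root given by the quadratic formula with discriminant $(2\mu_t-\kappa_t)^2 + 4\hat{\gamma}^2\mu_t(1-\mu_t)$. Your additional observations --- that the product of the roots is $-\mu_t/(1-\mu_t)<0$ so exactly one root is positive, and that $\mu_t\in\{0,1\}$ is excluded along the dynamics since $P(a_t)\gg 0$ --- merely make explicit the uniqueness and admissibility points the paper leaves implicit.
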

\begin{proof}
From~\eqref{eq::toy_model}, one obtains the following equation:
$$
\mu_t = \left[\frac{\hat{\gamma}\hat{a_t}} {1+\hat{\gamma}\hat{a_t}} - \frac{\hat{a_t}}{\hat{\gamma}+\hat{a_t}}\right]\mu_{t-1} 
          + \frac{\hat{a_t}}{\hat{\gamma}+\hat{a_t}}\enspace,
$$
that can be equivalently written as a second-order equation:
$
0 = \hat{a}_t^2 \left[\hat{\gamma}(\mu_t-1)\right] + \hat{a}_t \left[2\mu_t - \kappa_t\right] 
  + \left[\hat{\gamma}\mu_t\right]
$
of discriminant $\Delta = (2\mu_t-\kappa_t)^2 + 4\hat{\gamma}^2\mu_t(1-\mu_t)\geq 0$.
\end{proof}
\begin{corol}\label{corol::hat_x_gamma_0}
As a special case of~\Cref{prop::hat_x}, 
\begin{enumerate}[label=(\roman*)]
\item if $\gamma = 0$, $\hat{a}_t = \frac{\mu_t}{1-\mu_t}$,
\item the steady-state policy that converges to $\mu\in ]0,1[$ is obtained by fixing the price to
\begin{equation}
\hat{a} = \frac{ 2\mu-1 + \sqrt{ (2\mu-1)^2 + 4\hat{\gamma}^2\mu(1-\mu) } }
                  { 2\hat{\gamma}(1-\mu) } \enspace .
\end{equation}
\end{enumerate}
\end{corol}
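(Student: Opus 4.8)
The plan is simply to specialize the closed-form expression~\eqref{eq::hat_x} established in~\Cref{prop::hat_x}, since both items amount to particular choices of the parameters $\hat{\gamma}$ and $\kappa_t$ entering that formula. No fresh argument is needed: everything follows by substitution followed by an elementary simplification, and any uniqueness is inherited from the uniqueness statement of~\Cref{prop::hat_x} (recall that $\hat{a}_t = e^{-\beta(a_t - R)}$ is a strictly monotone, hence invertible, function of $a_t$, so determining $\hat{a}_t$ determines $a_t$).

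For item (i) I would set $\gamma = 0$, so that $\hat{\gamma} = e^{\beta\cdot 0} = 1$. Although~\Cref{prop::hat_x} is stated for $\gamma > 0$, the quadratic derivation underlying it remains valid at $\hat{\gamma} = 1$ (the leading coefficient $\hat{\gamma}(\mu_t - 1)$ stays nonzero as long as $\mu_t \neq 1$), so~\eqref{eq::hat_x} continues to apply. With $\hat{\gamma} = 1$ one has $\kappa_t = 1 + (\hat{\gamma}^2 - 1)(\mu_{t-1} - \mu_t) = 1$, regardless of $\mu_{t-1}$ and $\mu_t$. Substituting these values into the radicand of~\eqref{eq::hat_x} gives $(2\mu_t - 1)^2 + 4\mu_t(1 - \mu_t)$, and the one computation worth recording is the cancellation $(2\mu_t - 1)^2 + 4\mu_t(1 - \mu_t) = 4\mu_t^2 - 4\mu_t + 1 + 4\mu_t - 4\mu_t^2 = 1$, so the square root equals $1$. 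Plugging $\hat{\gamma} = 1$, $\kappa_t = 1$ and $\sqrt{\,\cdot\,} = 1$ back into~\eqref{eq::hat_x} yields $\hat{a}_t = (2\mu_t - 1 + 1)/(2(1 - \mu_t)) = \mu_t/(1 - \mu_t)$, as claimed.

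For item (ii) I would use that a steady-state policy is characterized by $\mu_{t-1} = \mu_t =: \mu$, so the increment $\mu_{t-1} - \mu_t$ vanishes and once again $\kappa_t = 1$, this time for arbitrary $\hat{\gamma}$. Substituting $\kappa_t = 1$ and $\mu_t = \mu$ into~\eqref{eq::hat_x} reproduces verbatim the announced expression for $\hat{a}$. Since the whole proof is substitution into an already-proven formula, there is no genuine obstacle; the single point requiring a moment of care is the algebraic collapse of the discriminant to the constant $1$ in item (i), which happens precisely because taking $\hat{\gamma} = 1$ removes all $\mu$-dependence from the radicand.
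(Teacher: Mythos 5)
Your proof is correct and takes essentially the same route as the paper: the paper's own (one-line) proof likewise obtains both items by forcing $\kappa_t = 1$, via $\hat{\gamma} = 1$ for item (i) and $\mu_{t-1} = \mu_t$ for item (ii), and substituting into~\eqref{eq::hat_x}. Your explicit collapse of the radicand $(2\mu_t-1)^2 + 4\mu_t(1-\mu_t) = 1$ and your remark that the quadratic derivation survives the boundary case $\hat{\gamma}=1$ simply spell out details the paper leaves implicit.
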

\begin{proof}
    Items (i) and (ii) are obtained with $\kappa_t = 1$, either with $\hat{\gamma} = 1$ or $\mu_{t-1}=\mu_t$.
\end{proof}

\Cref{prop::hat_x} gives an explicit expression of the (unique) action that allows a transition between state $\mu_{t-1}$ and $\mu_t$. The uniqueness can be extended to transitions $\mu_t = \mu_{t-1}P(a)$ in higher dimension, but the explicit characterization of the action is not straightforward.
We now want to compare the gain over a $\tau$-cycle $l$ and the steady-state gain $\overline{g}$. A first result is readily obtained in absence of switching costs, i.e., $\gamma = 0$, showing that constant-price policies are in this case optimal: 
\begin{prop}[Gain without switching cost]
Suppose that $\gamma=0$, then, the optimal steady-state policy induces a gain greater than the one achieved by any $\tau$-cycle $l$ of at least $\frac{V[l]}{\beta}$, i.e.,
$$g[l] \leq  \overline{g} - \frac{V[l]}{\beta} \enspace.$$
As a consequence, the optimal cycle corresponds to a constant-price policy.
\end{prop}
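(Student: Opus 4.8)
The plan is to reduce the cycle gain to a time-average of a single scalar function of the state and then exploit the strong concavity of that function. Since $\gamma=0$, the customer response is instantaneous, so \Cref{corol::hat_x_gamma_0}(i) gives $e^{-\beta(a_t-R)}=\mu_t/(1-\mu_t)$; that is, the admissible price $a_t$ is an explicit function of the current state $\mu_t$ alone. Substituting this into the per-period reward $(a_t-C)\mu_t$, I would introduce
$$
f(\mu):=(a(\mu)-C)\,\mu,\qquad a(\mu)=R-\frac{1}{\beta}\log\frac{\mu}{1-\mu},
$$
so that, for any $\tau$-cycle $l$, the gain is $g[l]=\frac{1}{\tau}\sum_{t=1}^{\tau}f(\mu_t)$, while the best steady-state gain is $\overline{g}=\max_{a\in\calA}f(\mu_L(a))$.

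Next I would establish that $f$ is strongly concave with exactly the right modulus. A direct differentiation gives
$$
f''(\mu)=-\frac{1}{\beta\,\mu(1-\mu)^2}\enspace.
$$
Since $\mu(1-\mu)^2\leq\frac{4}{27}\leq\frac12$ on $[0,1]$, this yields the uniform bound $f''(\mu)\leq-\frac{2}{\beta}$ for every $\mu\in(0,1)$. This precise constant is exactly what produces the factor $1/\beta$ in the statement, and verifying it (the computation of $f''$ together with the elementary maximization of $\mu(1-\mu)^2$) is the main technical point of the argument; everything else is soft.

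Finally I would compare the cycle average with the value of $f$ at the cycle mean. Writing $\overline{\mu}=\frac{1}{\tau}\sum_t\mu_t$ and applying Taylor's theorem with Lagrange remainder at $\overline{\mu}$, for each $t$ there is $\xi_t$ between $\mu_t$ and $\overline{\mu}$ with $f(\mu_t)=f(\overline{\mu})+f'(\overline{\mu})(\mu_t-\overline{\mu})+\tfrac12 f''(\xi_t)(\mu_t-\overline{\mu})^2$. Averaging over $t$ kills the linear term because $\sum_t(\mu_t-\overline{\mu})=0$, and the second-order bound $f''(\xi_t)\leq-2/\beta$ gives
$$
g[l]=\frac{1}{\tau}\sum_{t=1}^{\tau}f(\mu_t)\leq f(\overline{\mu})-\frac{1}{\beta}\,\frac{1}{\tau}\sum_{t=1}^{\tau}(\mu_t-\overline{\mu})^2=f(\overline{\mu})-\frac{V[l]}{\beta}\enspace.
$$
It then remains to note that $f(\overline{\mu})\leq\overline{g}$: each $\mu_t=\mu_L(a_t)$ is the image of an admissible price, and since $a\mapsto\mu_L(a)$ is continuous and monotone it maps the price interval $\calA$ onto an interval; the convex combination $\overline{\mu}$ therefore equals $\mu_L(\overline{a})$ for some $\overline{a}\in\calA$, i.e. a steady state, whence $f(\overline{\mu})\leq\overline{g}$. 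Combining the two displays yields $g[l]\leq\overline{g}-V[l]/\beta$. The consequence is then immediate: the constant policy at $\mu^{*}\in\argmax f$ realizes $g[l]=\overline{g}$ with $V[l]=0$, whereas any non-constant cycle has $V[l]>0$ and hence $g[l]<\overline{g}$, so the optimal cycle is a constant-price policy.
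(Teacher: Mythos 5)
Your proof is correct and follows essentially the same route as the paper: the paper writes $g[l]$ as the time-average of the per-period reward, isolates the strongly convex part $\mu\mapsto\mu\log\frac{\mu}{1-\mu}$ (whose second derivative simplifies to $\frac{1}{\mu(1-\mu)^2}\geq 2$, exactly your bound $f''(\mu)\leq-\frac{2}{\beta}$), and invokes Jensen's inequality for strongly convex functions, which your Taylor expansion at $\overline{\mu}$ with Lagrange remainder simply reproves in a self-contained way. Your only genuine addition is the explicit verification that $\overline{\mu}$ is itself an admissible steady state (via monotonicity and continuity of $a\mapsto\mu_L(a)$ on the price interval), a step the paper leaves implicit in its final inequality $f(\overline{\mu}[l])\leq\overline{g}$.
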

\begin{proof}
Using~\Cref{corol::hat_x_gamma_0},
$a_t = R - \frac{1}{\beta}\log\left(\frac{\mu_{t}}{1-\mu_{t}}\right),$
and the mean profit of a $\tau$-cycle $l$ is 
$$g[l] = (R-C) \overline{\mu}[l] - \frac{1}{\beta\tau}\sum_{t=1}^\tau \mu_t\log\left(\frac{\mu_t}{1-\mu_t}\right)\enspace.$$ 
The function $\mu \mapsto \mu\log\left(\frac{\mu}{1-\mu}\right)$ is strongly convex of modulus $1$. Therefore, using Jensen's inequality for strongly convex function, see e.g.~\cite{Merentes_2010}, we obtain that
$$g[l] \leq (R-C)\overline{\mu}[l] - \frac{1}{\beta}\overline{\mu}[l]\log\left(\frac{\overline{\mu}[l]}{1-\overline{\mu}[l]}\right) - \frac{V[l]}{\beta} \leq \overline{g} - \frac{V[l]}{\beta} \enspace.$$
\end{proof}
Let us specialize the $\tau$-cycles to a particular sub-class:
\begin{defin} A \emph{$(s,S,\tau)$-cycle} is a specific $\tau$-cycle, in which $\mu_t=S+\frac{s-S}{\tau}t, t\leq \tau$.
\end{defin}
\begin{prop}
Let us consider a $(s,S,\tau)$-cycle $l$. Then, 
$$g[l] \geq \frac{s(\tau-1)-S}{\tau}\gamma + O(1) \text{ as } \gamma\to\infty\enspace.$$
As a consequence, there exists a threshold $\Gamma> 0$ such that for any $\gamma\geq \Gamma$, the optimal steady-state policy is dominated by a $(s,S,\tau)$-cycle.
\end{prop}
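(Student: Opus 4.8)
The plan is to insert the explicit price formula~\eqref{eq::hat_x} of~\Cref{prop::hat_x} into the gain $g[l]=\frac{1}{\tau}\sum_{t=1}^{\tau}(a_t-C)\mu_t$ and to track the dominant term as $\gamma\to\infty$, i.e.\ as $\hat\gamma=e^{\beta\gamma}\to\infty$. Since $\hat a_t=e^{-\beta(a_t-R)}$, one has $a_t=R-\frac{1}{\beta}\log\hat a_t$, so everything reduces to the asymptotics of $\log\hat a_t$, which is governed by the sign of $\mu_{t-1}-\mu_t$ through $\kappa_t=1+(\hat\gamma^2-1)(\mu_{t-1}-\mu_t)$. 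First I would split the $\tau$ transitions of the cycle into \emph{decay} steps ($\mu_{t-1}>\mu_t$, where the price is raised) and the single \emph{promotion} step ($\mu_{t-1}<\mu_t$, where the price is cut so as to reset the market share): for a $(s,S,\tau)$-cycle there are exactly $\tau-1$ of the former and one of the latter.

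The core computation is then a dominant-balance expansion of the square root in~\eqref{eq::hat_x}. On a decay step, $\kappa_t\sim\hat\gamma^2(\mu_{t-1}-\mu_t)$ dominates, so $2\mu_t-\kappa_t<0$, and expanding $\sqrt{(2\mu_t-\kappa_t)^2+4\hat\gamma^2\mu_t(1-\mu_t)}$ around its leading term yields $\hat a_t\sim \frac{\mu_t}{\hat\gamma(\mu_{t-1}-\mu_t)}$, whence $a_t=R+\gamma+O(1)$ and $(a_t-C)\mu_t=\gamma\mu_t+O(1)$. On the promotion step $\kappa_t\sim\hat\gamma^2(\mu_{t-1}-\mu_t)<0$, so now $2\mu_t-\kappa_t>0$ dominates and the same expansion gives $\hat a_t\sim\frac{\hat\gamma(\mu_t-\mu_{t-1})}{1-\mu_t}$, hence $a_t=R-\gamma+O(1)$ and $(a_t-C)\mu_t=-\gamma\mu_t+O(1)$. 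The $O(1)$ remainders are uniform because the cycle has finitely many steps and the $\mu_t$ stay bounded away from $0$ and $1$.

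Summing over the cycle, the $O(1)$ contributions collapse and
$$\tau\,g[l]=\gamma\Big(\sum_{t\text{ decay}}\mu_t-\mu_{t^\star}\Big)+O(1),$$
where $t^\star$ denotes the promotion step. It then remains to bound the bracket from below using that every share of a $(s,S,\tau)$-cycle lies in $[s,S]$: each of the $\tau-1$ decay post-shares satisfies $\mu_t\ge s$, while the promotion post-share satisfies $\mu_{t^\star}\le S$, so the bracket is at least $s(\tau-1)-S$. Dividing by $\tau$ yields the announced bound $g[l]\ge\frac{s(\tau-1)-S}{\tau}\gamma+O(1)$.

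For the domination statement I would pick a feasible $(s,S,\tau)$-cycle with $s(\tau-1)-S>0$, which is possible as soon as $s>0$ and $\tau$ is large enough (since $S\le 1$). Its gain then diverges to $+\infty$ with $\gamma$, whereas~\Cref{prop::boundedness_gamma} guarantees that the optimal steady-state gain $\overline g$ stays bounded independently of $\gamma$; comparing the two gives a threshold $\Gamma>0$ beyond which $g[l]>\overline g$, i.e.\ the steady-state policy is dominated. The only genuinely delicate point I expect is the two-regime expansion of~\eqref{eq::hat_x}: one must identify which term dominates inside the square root (the balance flips sign between decay and promotion steps) and verify that, after the two leading $\pm\hat\gamma^2$ contributions cancel, the surviving correction is exactly of order $\hat\gamma^{-1}$ (decay) resp.\ $\hat\gamma$ (promotion), since it is this correction — not the cancelling leading terms — that fixes the sign and magnitude of the price and hence of the $\gamma$-coefficient in the gain.
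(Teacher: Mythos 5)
Your proposal is correct and follows essentially the same route as the paper's own proof: both split the cycle into the $\tau-1$ decay steps and the single promotion step, expand the square root in~\eqref{eq::hat_x} as $\hat{\gamma}=e^{\beta\gamma}\to\infty$ to get $a_t = R+\gamma+O(1)$ on decay steps and $a_t = R-\gamma+O(1)$ on the promotion step, bound the shares by $\mu_t\geq s$ and $\mu_\tau\leq S$, and conclude via the boundedness of $\overline{g}$ from~\Cref{prop::boundedness_gamma}. The only cosmetic difference is that the paper uses the one-sided inequality $\sqrt{a^2+b}\leq |a|+b/(2|a|)$ to obtain explicit bounds under explicit thresholds on $\hat{\gamma}$, whereas you use two-sided dominant-balance asymptotics (noting, as a small imprecision in your wording, that on the promotion step the leading terms add rather than cancel), which amounts to the same computation.
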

\begin{proof}
Recalling that $\sqrt{a^2+b}\leq |a| +\frac{b}{2|a|}$, we have $\displaystyle\sqrt{(2\mu-\kappa)^2 + 4\hat{\gamma}^2\mu(1-\mu)}\leq \vert 2\mu-\kappa\vert + \frac{\hat{\gamma}^2\mu(1-\mu)}{\vert 2\mu-\kappa\vert}$.
We first look at a period $1\leq t < \tau$ where $\mu_{t-1} - \mu_t = \frac{S-s}{\tau}$.
As we suppose that $\gamma\to\infty$, $\hat{\gamma}\geq \sqrt{1 + \frac{\tau}{S-s}}$ and so $\kappa \geq 2\mu$. Therefore,
$$
\begin{aligned}
\hat{a} &\leq \frac{\hat{\gamma}\mu}{1+(\hat{\gamma}^2-1)\frac{S-s}{\tau}-2\mu}
\end{aligned}
$$
and 
$$
\begin{aligned}
a&\geq R+\frac{1}{\beta}\log\left(\frac{(\hat{\gamma}^2-1)(S-s) - \tau}{\hat{\gamma}\tau}\right)\\
&\simeq \frac{1}{\beta}\log(\hat{\gamma}) + O(1) = \gamma + O(1) \enspace.
\end{aligned}$$
If now we look at the last period $t=\tau$. Then, as we suppose that $\gamma\to\infty$, $\hat{\gamma}\geq \sqrt{1+\frac{1}{S-s}}$. Therefore,
$$
\hat{a} \leq \frac{1+(\hat{\gamma}^2-1)(S-s)}{\hat{\gamma}(1-y)}
+ \frac{\hat{\gamma}}{(\hat{\gamma}^2-1)(S-s)-1}
$$
and 
$$
\begin{aligned}
a&\geq R-\frac{1}{\beta}\log\left( \frac{1+(\hat{\gamma}^2\shortminus 1)(S-s)}{\hat{\gamma}(1-y)}
+ \frac{\hat{\gamma}}{(\hat{\gamma}^2 \shortminus 1)(S-s)-1}\right) \\
&\simeq -\gamma + O(1) \enspace.
\end{aligned}
$$
The mean profit is finally bounded by below :
$g[l] \geq \frac{1}{\tau}\left[(\tau-1)s-S\right]\gamma + O(1)$.
To conclude, any $(s,S,\tau)$-cycle satisfying $\tau \geq 1 + \frac{S}{s}$ induces a mean profit that diverges with respect to $\gamma$. In the meantime, the steady-state optimum is bounded, see~\Cref{prop::boundedness_gamma}, and so dominated for sufficiently large switching cost $\gamma$.

\end{proof}

\begin{figure}[!ht]
    \centering
     \includegraphics[width=\linewidth, clip=true, trim=0cm 0cm 0cm 0cm]{./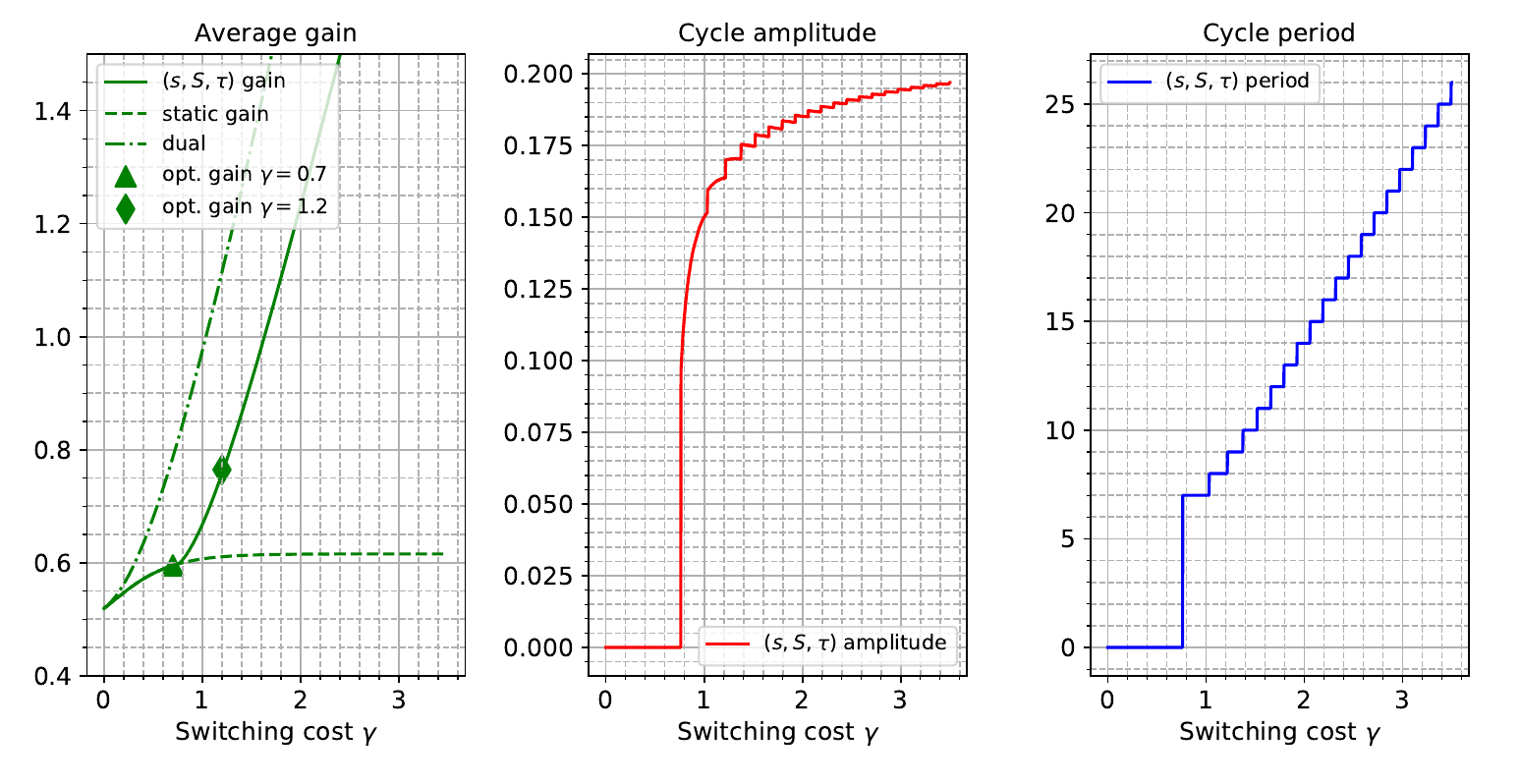}
     \caption{Evolution of the optimal $(s,S,\tau)$-cycle for a range of switching cost values}
  \label{fig::s_S_tau_cycle}
  \medskip\small
  The left (resp. middle, right) panel shows the gain (resp. cycle amplitude, cycle period) of the optimal $(s,S,\tau)$-cycle. The steady-state gain $\overline{g}$ is displayed for comparison, as well as the optimal gain obtained in~\Cref{fig::LFH_Erg_1D}. A kink appears at $\gamma\simeq 0.762$, indicating the separation of the cycling behavior from the steady-state behavior.
\end{figure}

In~\Cref{fig::s_S_tau_cycle}, we compute the optimal $(s,S,\tau)$-cycle, by iterating over the possible values of $s$, $S$, and $\tau$ for each given value of $\gamma$. Before the kink, the optimal cycle is in reality the constant-price strategy (cycle of amplitude $0$), and after this point, there exists cycle of positive amplitude that outperforms the steady-state strategy. The results found in~\Cref{fig::LFH_Erg_1D} for a broader class of cycles are consistent, and the $(s,S,\tau)$-cycles are good approximations of the optimal policy.

\section{Conclusion}
We developed an ergodic control model to represent the evolution of a large population of customers, able to  actualize their choices at any time. Using qualitative properties of the population dynamics (contraction in Hilbert's projective metric), we showed the existence of a solution to the ergodic eigenproblem (in the presence of noise or in the deterministic setting), which we applied to a problem of electricity pricing. A numerical study reveals the existence of optimal cyclic promotion mechanisms, that have already been observed in economics, and we proved this behavior on a toy example. We also quantified the sub-optimality of constant-price strategy in terms of a specific duality gap.

The present model has connections with partially observable MDPs, in which the state space is also a simplex. We plan to explore such connections in future work. We also aim at analyzing the problem through the weak KAM angle. In particular, we could expect to obtain a turnpike-like property when the Aubry set (to which the dynamics converge under any optimal policy) is reduced to a singleton. Finally, the approximation error induced by the discretization should be explored. In particular, exploiting the contraction of the dynamics may allow us to obtain convergence ratios, using similar arguments as in~\cite{Bayraktar_2023}.

\printbibliography[heading=bibintoc]

\end{document}